\newtheorem{theorem}{Theorem}[section]
\newtheorem{proposition}[theorem]{Proposition}
\newtheorem{lemma}[theorem]{Lemma}
\newtheorem{corollary}[theorem]{Corollary}
\newtheorem{definition}[theorem]{Definition}
\newtheorem{remark}[theorem]{Remark}
\newtheorem{problem}[theorem]{Problem}
\newtheorem{exercise}[theorem]{Exercise}
\newenvironment{proof}{\noindent\textbf{Proof}}
                      {$\Box$\vskip\theorempostskipamount}
\begin{document}
 
\title{\textbf{Decision problems for finite and infinite presentations of groups and monoids}}

\author{Carmelo Vaccaro}

\date{}

\maketitle

\begin{abstract} In this survey we show how well known results about the Word Problem for finite group presentations can be generalized to the Word Problem and other decision problems for non-necessarily finite monoid and group presentations. This is done by introducing functions playing the same rôle of the Dehn function for the given decision problem and by finding the Tietze transformations that leave this function invariant. This survey presents some original ideas and points of view.
 \end{abstract}

\smallskip \smallskip

\textit{2000 Mathematics Subject Classification}: Primary 20M05, 20F10; 

Secondary 03D40.

\tableofcontents

  \section*{Introduction}
  
 The Word Problem for finite group presentations is a very well studied problem. The following results are classical: \begin{enumerate} [(a)]
  
  \item \textit{A finite group presentation has a solvable Word Problem if and only if its Dehn function is computable.}
  
  \item \textit{The Dehn functions of two finite presentations for the same group are equivalent.}
  
\end{enumerate}

The goal of this survey is to show how these results can be generalized to the Word, the Conjugacy and the Membership Problem of non-necessarily finite monoid\footnote{this includes also semigroup presentations, see the observation after Definition \ref{sempre}} and group presentations.

The first question to answer is: what are the functions which play for the above decision problems the rôle played by the Dehn function? That is, what is the \textit{decidability function} for any of these problems, i.e., the function whose computability is equivalent to the solvability of the decision problem?

For the Word Problem of finite monoid presentations this function is the \textit{derivational length} (Definition \ref{derlen}), which remarkably was introduced by K. Madlener and F. Otto \cite{MO} long before the Dehn function (introduced by M. Gromov in \cite{Grom}). Part 2 of Proposition \ref{gersho} generalizes (a) to finite monoid presentations.

In a recent paper \cite{GI}, R. I. Grigorchuk and S. V. Ivanov have introduced for decidable (non-necessarily finite) group presentations a function $f_1$ which is computable if and only if the Word Problem is solvable. It has to be noticed that $f_1$ is equivalent to a function (called \textit{work}) introduced earlier by J.-C. Birget \cite{Birg} for monoid and group presentations. In 1 of Proposition \ref{gersho} we prove that the function \textit{work} permits to generalize (a) also to decidable monoid presentations.

It has to be noticed that there are examples of infinite decidable group presentations with unsolvable Word Problem but with a computable Dehn function (see Example 2.4 of \cite{GI}).

It is interesting to consider a group presentation as a monoid presentation and comparing the Dehn function and the derivational length. We have found a quadratic relation between these two functions in the case of finite presentations (2 of Proposition \ref{opti}). In a future paper we will show a linear relation under a mild hypothesis on the presentation, thus proving that in this situation the Dehn function and the derivational length are equivalent.

Let us now talk about the Conjugacy Problem. We have introduced (Definition \ref{gamma}) a function, denoted $\Gamma$, which is computable if and only if the Conjugacy Problem is solvable (on condition that the Word Problem be solvable, see Proposition \ref{CP}). We have treated the Conjugacy Problem in the context of monoid presentations, not only group presentations. For monoids the conjugacy relation is not necessarily symmetric; moreover the solvability of the Conjugacy Problem does not necessarily imply that of the Word Problem. This raises interesting questions about what can be generalized to monoids of the well known results about the conjugacy in groups (see Problems \ref{prob1} and \ref{prob2}).

The decidability function for the Membership Problem is the \textit{distortion function} (Definition \ref{dist}), introduced by Gromov \cite{Grom2}. The generalization of (a) to the Membership Problem (Proposition \ref{MP}) is due to B. Farb \cite{Farb} for group presentations and to Margolis, Meakin and \u{S}uni\'k \cite{MMS} for monoid presentations.

Let us now consider (b). What is special about finite presentations for the same group is that given any two of them, one can be obtained from the other by applications of \textit{elementary} Tietze transformations. Given two non-necessarily finite presentations for the same group this is not the case. To generalize this fact one has to change the point of view and given a finite or infinite presentation $\mathcal{P}$ consider all the presentations obtainable from $\mathcal{P}$ by means of Tietze transformations of a certain kind. A question that naturally arises is: given a decision problem, what are the Tietze transformations that leave its decidability function invariant? To answer this question we have introduced the so-called \textit{bounded Tietze transformations} (see Definition \ref{btt}) and found for any decidability function the kind of bounded Tietze transformations that leave the function invariant up to equivalence (Propositions \ref{dasz} and \ref{new}, Remark \ref{dasz2}).

\section{Words and presentations} \label{intro}

As usual, by a \textit{semigroup} we mean a set equipped with an associative product. A \textit{monoid} is a semigroup possessing an identity element. A \textit{group} is a monoid in which every element has an inverse.

Let $S$ be a semigroup devoid of identity element and let 1 be an element not belonging to $S$. Then we can embed $S$ into the monoid $S\cup \{1\}$ where we set $1s=s1=s$ for every $s \in S\cup \{1\}$. We denote by $S^1$ the monoid $S\cup \{1\}$. If $S$ is a monoid then by $S^1$ we denote $S$ itself.

\begin{definition} \label{} \rm A \textit{congruence} on a semigroup $S$ is an equivalence relation $\sim$ compatible with the product of $S$, that is if $a,x$ and $y$ are elements of $S$ and if $x\sim y$ then $a x\sim a y$ and $x a\sim y a$. In this case the set of equivalence classes of $S$ under $\sim$ is a semigroup, which is a monoid or a group if $S$ is. \end{definition}

A congruence on $S$ is a subset of $S^2$, that is it is the set of pairs $(x,y)$ such that $x\sim y$. The intersection of a family of congruences on $S$ is still a congruence.

\begin{definition} \label{} \rm Let $R\subset S^2$; the congruence \textit{generated by $R$}, denoted $\langle R \rangle$, is the intersection of all the congruences on $S$ containing $R$. We denote by $S/ \langle R \rangle$ the semigroup of the equivalence classes of $\langle R \rangle$. \end{definition}

\begin{remark} \label{raven} \rm If $R$ is \textit{symmetric}, i.e., if $(u,v)\in R$ implies that $(v,u)\in R$, then the congruence generated by $R$ can be described in the following way. Let $u, v \in S$ and let $(a, b)\in R$; we say that $(u, v)$ is a \textit{one step $R$-derivation \big(by means of $(a, b)$\big)} if there exists $x,y \in S^1$ such that $u=xay$ and $v=xby$.

We say that $(u, v)$ is an \textit{$R$-derivation} if $u=v$ or if there exist $a_0=u$, $a_1$, $\cdots$, $a_k=v \in S$ such that $(a_{i-1}, a_i)$ is a one step $R$-derivation for every $i=2, \cdots, k$. In this case we say that $(u, a_1, \cdots, a_{k-1}, v)$ is a \textit{$k$ steps $R$-derivation}.

 Now we prove that $u$ is congruent to $v$ in the congruence generated by $R$ if and only if $(u, v)$ is an $R$-derivation. Indeed let $\sim$ be the relation defined by: $u\sim v$ if $(u, v)$ is an $R$-derivation. Then $\sim$ is a congruence containing $R$, that is $\sim$ contains $\langle R \rangle$. On the other side if a congruence contains $R$ then it contains any $R$-derivation, that is it contains $\sim$; therefore $\langle R \rangle$ contains $\sim$. \end{remark}

\begin{definition} \label{} \rm Given $R\subset S^2$, we define the \textit{symmetrized of $R$} as the set $R':=R\cup \{(y,x) : (x,y)\in R \}$. \end{definition} 

Obviously $R'$ is symmetric and the congruence generated by $R$ coincides with that generated by $R'$. Thus it is not restrictive to consider only congruences generated by symmetric sets.

\begin{remark} \label{only} \rm Let $S$ be a semigroup (with or without identity), let 1 be an element not belonging to $S$, set $M:=S\cup \{1\}$ and extend to $M$ the product of $S$ by setting $m1=1m=m$ for $m\in M$. We have that $M$ is a monoid whose identity is $1$ (if $S$ is a monoid with identity $e$ then $e$ is not the identity of $M$ since we have set $e1=e\neq 1$).

Let $R\subset S^2$ and denote by $\langle R \rangle_S$ and $\langle R \rangle_M$ the congruences of $S$ and $M$ respectively generated by $R$. We show that $S/\langle R \rangle_S$ is isomorphic to $(M/\langle R \rangle_M) \setminus{\{1\}}$. 

It is obvious that if $u$ and $v$ are elements of $M$ such that $uv=1$ then necessarily $u=v=1$. Since for every $(a, b) \in R$ we have that $a, b\neq 1$, then if $(xay, xby)$ is a one step $R$-derivation of $M$ then $xay, xby \neq 1$. This implies that the congruence class of 1 contains only 1. This implies also that the $R$-derivations of $S$ and $M$ coincide and that if $u \in S$ then the congruence classes of $u$ in $S$ and in $M$ coincide. Thus the application from $S/\langle R \rangle_S$ to $(M/\langle R \rangle_M) \setminus{\{1\}}$ sending the congruence class of $u$ in $S$ to that in $M$ is well defined and is an isomorphism of semigroups. \end{remark}

\begin{remark} \label{} \rm Let $G$ be a group and let $R\subset G^2$. Set
       $$R_0:=\{uv^{-1} : (u,v)\in R\};$$
   then $\langle R \rangle$ coincides with the congruence generated by the set $\{(w,1) : w \in R_0\}$. Moreover if $R$ is symmetric then $R_0$ contains the inverse of any of its elements. 
   
   Let $\mathcal{N}$ be the normal subgroup of $G$ normally generated by $R_0$, that is the intersection of all the normal subgroups of $G$ containing $R_0$. Then $u$ is congruent to $v$ if and only if $u v^{-1} \in \mathcal{N}$. Moreover\footnote{we observe that $\langle R \rangle$ is a subset of $G^2$ while $\mathcal{N}$ is  a subset of $G$} $G/\langle R \rangle=G/\mathcal{N}$ and $\mathcal{N}$ coincides with the equivalence class of 1 in the congruence generated by $R$. 
   
   Let $R$ be symmetric; then there is an $R$-derivation from $u$ to $v$ if and only if there exist $a_1, \cdots, a_k \in G$ and $r_1, \cdots, r_k \in R$ such that $u v^{-1}=a_1 r_1 a_1^{-1} \cdots a_k r_k a_k^{-1}$.  \end{remark}

\begin{definition} \label{} \rm Let $X$ be a set (finite or infinite); the \textit{free monoid} on $X$, denoted $\mathcal{M}(X)$, is the set of words on $X$ equipped with the usual operation of concatenation of words. The identity element of $\mathcal{M}(X)$ is the word with zero letters, denoted 1. If $w=x_1 \cdots x_m$ is an element of $\mathcal{M}(X)$ with the $x_i \in X$ then the \textit{length} of $w$, denoted $|w|$, is the natural number $m$. The length of 1 is zero. The elements of $X$ are called \textit{letters}. Given $(u,v)\in \mathcal{M}(X)^2$ we define the \textit{length of $(u,v)$} as $|u|+|v|$. \end{definition}

$\mathcal{M}(X)$ is \textit{free on the set $X$ in the category of monoids} (see Def. I.7.7 of \cite{Hunger}), that is for every monoid $M$ and for every function $f$ from $X$ to $M$ there is one and only one homomorphism from $\mathcal{M}(X)$ to $M$ which extends $f$.

\begin{definition} \label{} \rm The \textit{free semigroup} on $X$, denoted $\mathcal{M}(X)_+$, is equal to $\mathcal{M}(X)$ minus the element 1, that is it is the set of words on $X$ of positive length. It is free on the set $X$ in the category of semigroups. \end{definition}

We recall that $\langle R \rangle$ denotes the congruence generated by $R$.

\begin{definition} \label{semo} \rm Let $R$ be a subset of $\mathcal{M}(X)^2$; we say that $<  X \, | \, R  >$ is a \textit{monoid presentation} for $\mathcal{M}(X)/\langle R \rangle$. A monoid presentation is also called a \textit{string rewriting system} or a \textit{Thue system}. The elements of $X$ are called \textit{generators} and those of $R$ are called \textit{defining relations}. The elements of $\langle R \rangle$ are called \textit{relations}.  \end{definition}

\begin{definition} \label{sempre} \rm Let $R$ be a subset of $\mathcal{M}(X)_+^2$; we say that $<  X \, | \, R  >$ is a \textit{semigroup presentation} for $\mathcal{M}(X)_+/\langle R \rangle$.    \end{definition}

By virtue of Remark \ref{only}, if $\mathcal{P}=<  X \, | \, R  >$ is a semigroup presentation for the semigroup $S$ then $\mathcal{P}$ considered as a monoid presentation presents\footnote{we observe that if $S$ is a monoid and if $e$ is the identity of $S$ then as seen in Remark \ref{only}, $e$ is not the identity for $S\cup\{1\}$} $S\cup\{1\}$ where 1 is an element not belonging to $S$ and is the identity element for $S\cup\{1\}$. 

This means that it is not restrictive to consider only monoid presentations; indeed, if $\mathcal{P}$ is a semigroup presentation for the semigroup $S$, then we can consider $\mathcal{P}$ as a monoid presentation for a monoid $M$ and retrieve $S$ by eliminating the element 1 from $M$.

\begin{definition} \label{stop} \rm Let $X^{-1}$ be a set disjoint from $X$ such that $|X| = |X^{-1}|$ and suppose given a bijection $X\rightarrow X^{-1}$. We denote by $x^{-1}$ the image by this bijection of an element $x \in X$ and we call it \textit{the inverse of x}. If $y \in X^{-1}$ we denote by $y^{-1}$ the unique element of $X$ such that $(y^{-1})^{-1}=y$. The \textit{free group} on $X$, denoted $\mathcal{F}(X)$, is the quotient of the free monoid $\mathcal{M}(X \cup X^{-1})$ by the congruence generated by 
                    $$\texttt{F}:=\big\{(xx^{-1}, 1) : x \in X \cup X^{-1}\big\},$$
      that is $< X \cup X^{-1} \, | \, \texttt{F} >$ is a monoid presentation for $\mathcal{F}(X)$.  \end{definition}

$\mathcal{F}(X)$ is free on the set $X$ in the category of groups.

Any equivalence class of the congruence generated by $\texttt{F}$ contains one and only one \textit{reduced word}, i.e., a word of the form $x_1 \cdots x_m$ such that $x_i^{-1}\neq x_{i+1}$ for every $i=1, \cdots, m-1$ (see The. 1.2 of \cite{MKS}). If $u\in \mathcal{M}(X \cup X^{-1})$ and if $u'$ is the unique reduced word congruent to $u$ then we say that $u'$ is the \textit{reduced form} of $u$.

\begin{definition} \label{rho} \rm We let $\rho: \mathcal{M}(X \cup X^{-1}) \rightarrow \mathcal{F}(X)$ be the function sending a word to its unique reduced form.  \end{definition}

We can consider $\mathcal{F}(X)$ as a subset of $\mathcal{M}(X \cup X^{-1})$ (but not as a subgroup). Given two words $u$ and $v$ of $\mathcal{F}(X)$, we denote by $u v$ their product in $\mathcal{M}(X \cup X^{-1})$ and by $\rho(u v)$ that in $\mathcal{F}(X)$. If $u v$ is reduced then the products of $u$ by $v$ in $\mathcal{M}(X \cup X^{-1})$ and in $\mathcal{F}(X)$ are equal.

\begin{definition} \label{segr} \rm  Let $R$ be a subset of $\mathcal{F}(X)$; we say that $<  X \, | \, R  >$ is a \textit{group presentation} for $\mathcal{F}(X)/\mathcal{N}$, where $\mathcal{N}$ is the normal subgroup of $\mathcal{F}(X)$ normally generated by $R$. The elements of $R$ and those of $\mathcal{N}$ are called respectively \textit{defining relators} and \textit{relators}. 

We say that $<  X \, | \, R  >$ \textit{corresponds} to the monoid presentation $< X \cup X^{-1} \, | \, R_1 >$ where $R_1=\{(u,v^{-1}) \in \mathcal{F}(X)^2: uv \in R\} \cup \texttt{F}$. 
     \end{definition}

For every $w \in R$ there are exactly $|w|+1$ pairs of words $(x,y)$ such that $xy=w$; thus if $R$ is finite then $|R_1|=|R|+\sum_{w\in R}|w|$ .

\begin{definition} \label{comm} \rm  Let $X$ be a set; the \textit{free commutative monoid on $X$} \big(denoted $\mathcal{CM}(X)$\big) is the monoid presented by the monoid presentation
         $$<  X \, | \, (xy, yx) \, \,\mathrm{for \, \, every \, \,} x, y \in X >.$$
     The \textit{free abelian group on $X$} \big(denoted $\mathcal{FA}(X)$\big) is the group presented by the group presentation
     $$<  X \, | \, xyx^{-1}y^{-1} \, \,\mathrm{for \, \, every \, \,} x, y \in X >.$$
      \end{definition}

 \section{Computable functions and decidable sets} \label{}

For a simple but rigorous introduction to \textit{computable functions} and \textit{decidable sets} a good reference is the book of Shen-Vereshchagin \cite{Shen}.

\begin{definition} \rm \label{} Let $E$ be a subset of $\mathbb{N}$ and let $f$ be a function from $E$ to $\mathbb{N}$. The function $f$ is said \textit{computable} if there exists an algorithm which taken as input an $n\in \mathbb{N}$ then \begin{itemize}

  \item it halts and gives $f(n)$ as output if $n\in E$;

  \item it does not halt if $n\notin E$.
 
\end{itemize}
     \end{definition}

A computable function is also called \textit{recursive}.

\begin{definition} \label{} \rm The set $E\subset \mathbb{N}$ is said \textit{decidable} if its characteristic function is decidable, i.e., if there exists an algorithm which determines whether an arbitrary $n\in \mathbb{N}$ belongs to $E$.
     \end{definition}

 \begin{definition} \label{enume} \rm  The set $E$ is said \textit{enumerable} if there exists an algorithm which enumerates the elements of $E$. This means that there is an algorithm which for every $n$ outputs a certain $e_n$ and that $E=\{e_n : n \in \mathbb{N}\}$. We assume that for some $n$ the output of this algorithm can be empty, that is to be formal we must write $E=\{e_n : n \in \mathbb{N} \,\, \textrm{and} \, \, e_n \, \, \textrm{is not empty}\}$. We also assume that $e_m$ can be equal to $e_n$ for $m\neq n$ and that we are able to determine whether or not $e_m=e_n$.
     \end{definition}

A decidable set is enumerable. It is obvious that $E$ is decidable if and only if $E$ and $\mathbb{N} \setminus{E}$ are enumerable. Decidable and enumerable sets are also called \textit{recursive} and \textit{recursively enumerable} respectively in the literature.

 Any finite subset of $\mathbb{N}$ is decidable (see 1.2 of \cite{Shen}). Anyway when talking about a finite set $E$ we do not only assume that its elements are finitely many but we require also the following condition: \textit{there exists an algorithm enumerating $E$ which after a finite time it gives only empty outputs\footnote{or equivalently we can say that after a finite time the algorithm stops and does not output anything more}}. This condition is necessary to avoid paradoxes (see Appendix \ref{app}). 
 
 We call a set verifying the latter condition a \textit{finite and effectively computable set}. It turns out that almost every finite set encountered in mathematics is also effectively computable, so we will mostly use the the terms \textit{finite set} and \textit{finite and effectively computable set} as synonymous.

 \begin{definition} \rm \label{}  Let $U$ and $V$ be enumerable sets, that is $U=\{u_n : n \in \mathbb{N}\}$ and $V=\{v_n : n \in \mathbb{N}\}$, where there are two algorithms whose $n$-th outputs are $u_n$ and $v_n$ respectively. Let $V'$ be a subset of $V$ and let $f$ be a function from $V'$ to $U$. Let $\overline{f}$ be the following function: let $n$ be a natural such that $v_n \in V'$ and let $f(v_n)=u_m$; then set $\overline{f}(n)=m$. Let $E=\{n \in \mathbb{N} : v_n \in V'\}$; then $\overline{f}$ is a function from $E$ to $\mathbb{N}$. We say that $f$ is \textit{computable} if $\overline{f}$ is computable, that is if there exists an algorithm which taken $v_n$ as input, it halts and gives $f(v_n)$ as output if $v_n \in V'$, otherwise it does not halt. 
 
 In an analogous way one can define \textit{decidable} and \textit{enumerable} subsets of an enumerable set.
      \end{definition}

   \begin{remark} \label{yroth}  \rm If $E$ and $F$ are enumerable sets, then also $E\times F$ is. More generally if $\{E_n\}_{n\in \mathbb{N}}$ is an enumerable family of enumerable sets, then also $\bigcup_{n\in \mathbb{N}} E_n$ is enumerable. This proves that if $X$ is enumerable then the free monoid $\mathcal{M}(X)$ and the free group $\mathcal{F}(X)$ are enumerable. Thus a monoid or group with an enumerable set of generators is enumerable. 
   
   The same is true also for countable sets, that is a monoid or group generated by a countable set of elements is countable. We recall that by admitting the \textit{Church-Turing thesis} (see 3.3 of \cite{Sipser}), there are countable sets which are not enumerable since there are countably many Turing machines and uncountably many subsets of $\mathbb{N}$.
      \end{remark}

 \begin{definition} \rm \label{}  A presentation $\mathcal{P}=<  X \, | \, R  >$ is said \textit{finitely} or \textit{decidably} or \textit{enumerably generated} if $X$ is respectively finite, decidable or enumerable. In the same way $\mathcal{P}$ is said \textit{finitely} or \textit{decidably} or \textit{enumerably related} if $R$ is finite, decidable or enumerable. A presentation which is finitely generated and related is said \textit{finite}. A presentation which is finitely generated and decidably related is said \textit{finitely generated decidable}.
      \end{definition}

If a group presentation is finitely, decidably or enumerably generated or related, the same is true for the monoid presentation corresponding to it (Definition \ref{segr}).

   \begin{proposition} \label{dger}  If a monoid admits a decidably generated and enumerably related presentation then it admits also a presentation which is decidably generated and related. \end{proposition}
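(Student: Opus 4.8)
The plan is to use a padding argument, namely a monoid version of \emph{Craig's trick}. Write $\mathcal{P}=<X \mid R>$ with $X$ decidable and $R$ enumerable, and fix once and for all an algorithm enumerating $R$, so that $R=\{(u_n,v_n):n\in\mathbb{N}\}$ where the algorithm outputs the pair $(u_n,v_n)$ (possibly empty, possibly with repetitions, in the sense of Definition \ref{enume}) as its $n$-th output. The obstruction to deciding membership in $R$ directly is that recognizing the pair $(u_n,v_n)$ seems to require knowing the a priori unbounded time at which it is enumerated. The idea is to encode the index $n$ into the relation itself, so that from a candidate relation one can read off $n$ and then verify membership by running the enumeration algorithm only up to its $n$-th output, which terminates.

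Concretely, I would choose a letter $t\notin X$ and set $X':=X\cup\{t\}$; this is decidable, since membership in $X'$ is decided by first testing equality with $t$ and otherwise testing membership in $X$. I then define
$$R':=\{(t,1)\}\cup\{(u_nt^n,\,v_nt^n):n\in\mathbb{N}\text{ with nonempty }n\text{-th output}\}.$$
To see that $R'$ is decidable, take a pair $(a,b)$ and first test whether $(a,b)=(t,1)$. Otherwise, since every $u_n,v_n$ lies in $\mathcal{M}(X)$ and $t\notin X$, each word $u_nt^n$ is obtained by appending a block of $t$'s to a word containing no $t$; so I strip from $a$ and $b$ their maximal trailing runs of $t$'s, writing $a=w_at^{k_a}$ and $b=w_bt^{k_b}$. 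If $w_a$ or $w_b$ still contains a $t$, or if $k_a\neq k_b$, I reject. Otherwise I set $n:=k_a$, run the enumeration algorithm to compute its $n$-th output, and accept if and only if that output is the pair $(w_a,w_b)$. This is a terminating procedure, so $R'$ is decidable; hence $\mathcal{Q}:=<X' \mid R'>$ is decidably generated and related.

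It then remains to check that $\mathcal{Q}$ presents the same monoid as $\mathcal{P}$. In $\mathcal{M}(X')/\langle R'\rangle$ the relation $(t,1)$ forces $t$ to be congruent to $1$, whence each padded relation $(u_nt^n,v_nt^n)$ becomes $(u_n,v_n)$, while conversely every relation of $R$ holds. Making this precise, the map $X'\to\mathcal{M}(X)/\langle R\rangle$ sending $x\mapsto[x]$ for $x\in X$ and $t\mapsto1$ extends, by the universal property of $\mathcal{M}(X')$, to a homomorphism that kills every pair of $R'$, and so induces $\overline{\phi}\colon\mathcal{M}(X')/\langle R'\rangle\to\mathcal{M}(X)/\langle R\rangle$; symmetrically the map $X\to\mathcal{M}(X')/\langle R'\rangle$, $x\mapsto[x]$, kills every pair of $R$ (using $t\equiv1$) and induces $\overline{\psi}$ in the opposite direction. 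Since $[t]=1$, the monoid $\mathcal{M}(X')/\langle R'\rangle$ is generated by the images of $X$, and one checks that $\overline{\phi}$ and $\overline{\psi}$ agree with the identity on generators, hence are mutually inverse.

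The only real content is the decidability of $R'$: the padding by $t^n$ is precisely what converts the unbounded search implicit in enumerability into the bounded computation of running the enumerator to a prescribed step, and the fresh letter together with the relation $(t,1)$ guarantees both that the index can be parsed unambiguously and that the presented monoid is unchanged. The isomorphism in the last step is a routine verification via the universal property (equivalently, via the description of $\langle\,\cdot\,\rangle$ through $R$-derivations in Remark \ref{raven}), so I expect the parsing-and-verification argument for $R'$ to be the main point to get right.
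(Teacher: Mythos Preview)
Your proof is correct and takes essentially the same approach as the paper: both pad each enumerated relation with a power of a fresh letter so that the enumeration index can be read off and membership tested by running the enumerator for a bounded number of steps. The only differences are cosmetic---the paper prepends $e^n$ to the left-hand side and uses a family of relations making $e$ act as an identity on each generator, whereas you append $t^n$ to both sides and use the single relation $(t,1)$, which if anything makes the isomorphism with the original monoid cleaner.
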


   \begin{proof}  Indeed let $\mathcal{P}=<  X \, | \, R  >$ be a presentation with $X$ decidable and $R$ enumerable and let $M$ be the monoid presented by $\mathcal{P}$. Thus $R=\{(a_n, b_n) : n \in \mathbb{N}\}$ where there is an algorithm whose $n$-th output is $(a_n, b_n)$. Let $e$ be an element not belonging to $\mathcal{M}(X)$, set
    $$R_1:=\big\{(ex, x) : x\in X\cup\{e\}\big\} \cup \{(xe, x) : x\in X\}$$
and let
      $$R'=\{(e^n a_n, b_n) : n \in \mathbb{N}\} \cup R_1.$$
    Obviously $< X\cup \{e\} \, | \, R' >$ is a decidably generated presentation of $M$, which is finitely generated if $\mathcal{P}$ is finitely generated. Let us prove that $R'$ is decidable. Let $(u, v)$ be an element of $\mathcal{M}(X)^2$. Since $X$ is decidable, the same is true for the set $R_1$, thus one can decide whether or not $(u, v)$ belongs to $R_1$. Let $(u, v)$ do not belong to $R_1$. If $u$ is not of the form $e^m w$ where $w$ is a word not containing $e$ then $(u, v)$ does not belong to $R'$; the same if $v$ contains the letter $e$. Otherwise let us compute $(a_m, b_m)$; if $w=a_m$ and $v=b_m$ then $(u, v)$ belongs to $R'$ otherwise it does not. \end{proof}

By the proof of Proposition \ref{dger} we have the following

     \begin{corollary}  \label{dger2}  If a monoid admits a finitely generated and enumerably related presentation then it admits also a presentation which is finitely generated and decidably related. \end{corollary}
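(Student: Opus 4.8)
The plan is to observe that this corollary falls out directly from the \emph{construction} used to prove Proposition \ref{dger}, rather than merely from its statement. First I would recall that any finite set is decidable, so a finitely generated and enumerably related presentation $\mathcal{P} = <  X \, | \, R  >$ (with $X$ finite and $R$ enumerable) is in particular \emph{decidably generated and enumerably related}. Hence the hypotheses of Proposition \ref{dger} are met, and applying it produces a presentation $< X \cup \{e\} \, | \, R' >$ of the same monoid $M$ that is decidably generated and decidably related.

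The crucial extra observation is to track the size of the new generating set. The construction in the proof of Proposition \ref{dger} enlarges $X$ by adjoining a \emph{single} new symbol $e$ (used to separate the defining relations $(e^n a_n, b_n)$ by their prefixes of $e$'s). Therefore, if $X$ is finite, so is $X \cup \{e\}$, and the resulting presentation is not merely decidably generated but \emph{finitely} generated. Since the proof of Proposition \ref{dger} already establishes that $R'$ is decidable, the presentation $< X \cup \{e\} \, | \, R' >$ is finitely generated and decidably related, which is exactly the assertion. (Indeed this was flagged parenthetically in that proof, where it was noted that the new presentation "is finitely generated if $\mathcal{P}$ is finitely generated.")

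I do not expect any substantial obstacle here, so the only point requiring care is terminological: with the convention adopted after the definitions of decidable and enumerable sets, "finite" is to be read as "finite and effectively computable." Since we adjoin one explicitly given generator to a finite and effectively computable set $X$, the set $X \cup \{e\}$ is again finite and effectively computable, so the finiteness claim holds in the intended strong sense and no additional verification is needed.
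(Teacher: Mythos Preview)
Your proposal is correct and matches the paper's approach exactly: the corollary is deduced directly from the construction in the proof of Proposition \ref{dger}, using that only a single generator $e$ is adjoined so that $X\cup\{e\}$ remains finite when $X$ is. The paper even signals this with the same parenthetical remark you cite, so nothing further is needed.
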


Proposition \ref{dger} and Corollary \ref{dger2} are valid also for group presentations.

 \begin{proposition} \label{} Let $\mathcal{P}=<  X \, | \, R  >$ be a monoid presentation and let $\mathcal{M}(X)$ and $R$ be enumerable. Then the set of derivations for $\mathcal{P}$ is enumerable.   \end{proposition}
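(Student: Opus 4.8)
The plan is to build up derivations from one-step derivations, relying only on the positive (enumerative) information about $R$ and never testing membership in $R$ negatively. Since the symmetrized set $R'$ is enumerable whenever $R$ is---coordinate-swapping is computable and preserves enumerability---and since the derivations attached to $R$ are those attached to $R'$, I may assume at the outset that $R$ is symmetric, so that the notion of derivation of Remark \ref{raven} applies.

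First I would show that the set $D_1\subset\mathcal{M}(X)^2$ of one-step $R$-derivations is enumerable. By Remark \ref{yroth} the set $\mathcal{M}(X)\times\mathcal{M}(X)\times R$ is enumerable, being a product of enumerable sets. The map sending $(x,y,(a,b))$ to the pair $(xay,xby)$ is total and computable, as it only involves concatenation of words; hence its image is enumerable (run the enumeration of the domain and apply the map). Since $\mathcal{M}(X)$ is already a monoid, here $S^1=\mathcal{M}(X)$, so this image is exactly $D_1$.

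Next I would assemble derivations of arbitrary length. For a fixed $k$, a $k$-step derivation $(a_0,a_1,\dots,a_k)$ is the same datum as a $k$-tuple $\big((a_0,a_1),\dots,(a_{k-1},a_k)\big)$ of elements of $D_1$ subject to the matching condition that the second coordinate of each pair equal the first coordinate of the next. The set $D_1^{\,k}$ is enumerable by Remark \ref{yroth}, and the matching condition is a conjunction of finitely many equalities of words, hence decidable, since equality in the enumerable set $\mathcal{M}(X)$ is decidable by the convention made in Definition \ref{enume}. Thus I can enumerate $D_1^{\,k}$ and, for each tuple, test the matching condition and output the associated derivation when it holds; this enumerates the $k$-step derivations. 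Taking the union over all $k$, a countable union of enumerable sets and so enumerable again by Remark \ref{yroth}, and adjoining the trivial derivations $u=v$ obtained directly from the enumeration of $\mathcal{M}(X)$, yields an enumeration of the whole set of derivations for $\mathcal{P}$.

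The point that needs care, and the only place where the hypothesis is delicate, is that $R$ is merely enumerable rather than decidable: I cannot decide whether a given pair lies in $D_1$. The construction circumvents this by generating $D_1$ positively and then applying only decidable tests (equality of words); at no stage does it ask whether some pair fails to be a one-step derivation. This is exactly what makes the argument go through with enumerability alone, and I expect it to be the crux of the proof.
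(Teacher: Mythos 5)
Your proposal is correct and follows essentially the same route as the paper: enumerate the one-step derivations as the image of $\mathcal{M}(X)^2\times R$, build the $k$-step derivations from products of enumerable sets filtered by the (decidable) matching condition on endpoints, and take the union over $k$ via Remark \ref{yroth}. The only cosmetic difference is that you filter $k$-tuples of one-step derivations directly where the paper glues a $(k-1)$-step derivation to a one-step derivation inductively; the content is the same.
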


 \begin{proof} First we prove that for every $k$, the set of $k$-steps derivations is enumerable. Let $k=1$; the set of 1-step derivations is enumerable since it is the set of pairs $(xay, xby)$ where $x, y \in \mathcal{M}(X)$ and $(a,b) \in R$, thus it can be represented as the Cartesian product $\mathcal{M}(X)^2 \times R$, which is enumerable since $\mathcal{M}(X)$ and $R$ are enumerable. 
 
 By induction hypothesis the sets of 1-step derivations and of $(k-1)$-steps derivations are  enumerable, thus their Cartesian product is enumerable and there is an algorithm $\texttt{A}$ enumerating it. Then we define an algorithm $\texttt{A}_k$ in the following way: let $n$ be a natural number and let $(a_0, a_1, \cdots, a_k, b, c)$ be the $n$-th output of $\texttt{A}$, where $(a_0, a_1, \cdots, a_{k-1})$ is a $(k-1)$-steps derivations and $(b, c)$ a 1-step derivation. Then if $a_{k-1}=b$, the $n$-th output of $\texttt{A}_k$ is $(a_0, a_1, \cdots, a_k, c)$, otherwise it is an empty output. The algorithm $\texttt{A}_k$ enumerates the set of $k$ steps derivations.

 The set of derivations of $\mathcal{P}$ is the union for all the naturals $k$ of the sets of $k$ steps derivations and by Remark \ref{yroth} is enumerable since each of these is enumerable.  \end{proof}

 We have seen in Remark \ref{yroth} that if $X$ is enumerable then $\mathcal{M}(X)$ too is enumerable. If $X$ and $R$ are enumerable then we can suppose that the algorithms enumerating $X$ and $R$ operate in parallel and that when an element $r=x_1 \cdots x_m\in R$ is outputted with the $x_i\in X$, then the elements $x_1, \cdots, x_m$ have already been outputted.
 
  The set of relations of $\mathcal{P}$ is the set of pairs $(a, b)$ such that there is a derivation from $a$ to $b$. Therefore if the set of derivations is enumerable also that of relations is enumerable. Thus we have the following

 \begin{corollary} \label{avig} Let $\mathcal{P}=<  X \, | \, R  >$ be a presentation and let $X$ and $R$ be enumerable; then $\mathcal{M}(X)$, the set of derivations and the set of relations of $\mathcal{P}$ are enumerable.   \end{corollary}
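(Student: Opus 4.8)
The plan is to treat this as an assembly of the three enumerability claims, each reducing to something already in hand. First, that $\mathcal{M}(X)$ is enumerable is immediate from Remark \ref{yroth}, which asserts precisely that if $X$ is enumerable then so is the free monoid $\mathcal{M}(X)$; this part needs no new argument. Second, the enumerability of the set of derivations is exactly the conclusion of the Proposition immediately preceding, whose hypotheses are that $\mathcal{M}(X)$ and $R$ be enumerable. Here $R$ is enumerable by assumption and $\mathcal{M}(X)$ is enumerable by the first step, so both hypotheses are met and that Proposition applies verbatim.

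For the third claim, the set of relations, I would exhibit it as the image of the set of derivations under a computable map. Recall that a relation is a pair $(a,b)$ admitting a derivation, i.e. the pair formed by the first and last entries of some tuple $(a_0, a_1, \ldots, a_k)$ appearing in the enumeration of derivations. I would take the algorithm enumerating derivations supplied by the preceding Proposition and post-compose it with the operation sending an output tuple $(a_0, \ldots, a_k)$ to the pair $(a_0, a_k)$. Reading off the first and last coordinates is plainly effective, so the composite algorithm enumerates the set of relations; this is just the remark made in the text before the statement, that enumerability of derivations yields enumerability of relations. To be safe one should also account for the trivial derivations $u = v$ allowed by Remark \ref{raven}: the diagonal pairs $(u,u)$ form the image of $\mathcal{M}(X)$ under $u \mapsto (u,u)$, hence an enumerable set by the first step, and it can be adjoined by interleaving its enumeration with the previous one.

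There is no genuine obstacle to overcome. The only point deserving a word of care is that derivations are tuples of varying length $k$, so relations cannot be indexed by a fixed arity; this is harmless, since the preceding Proposition already produces a single algorithm enumerating all derivations simultaneously across all $k$, and extracting endpoints from each of its outputs preserves the single-enumeration structure.
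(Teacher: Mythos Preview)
Your proposal is correct and follows essentially the same route as the paper: the text immediately preceding the corollary deduces enumerability of $\mathcal{M}(X)$ from Remark~\ref{yroth}, of derivations from the preceding Proposition, and of relations by projecting each derivation to its endpoint pair. Your extra care about the diagonal pairs $(u,u)$ is a harmless refinement the paper leaves implicit.
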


 \section{Van Kampen diagrams for monoid presentations}  \label{VKdfsp}
 
 Van Kampen diagrams for group presentation are well studied in the literature (see for instance Sec. V.1 of \cite{LS}, \S 11 of \cite{Olsh} or Sec. 4 of \cite{Brids}). In \cite{Rem1}, J. H. Remmers defined diagrams for semigroup presentations and generalized to semigroups the well known results about small cancellations presentations (see also \cite{Rem2} and Sec. 5 of \cite{Higg}).

For full details about the notions treated in this section we refer the reader to 1.7 of \cite{Higg}.

\smallskip
 
 A \textit{source} in a directed graph is a vertex with no \textit{out-edges}, a \textit{sink} is one with no \textit{in-edges}. If $X$ is a set then an \textit{$X$-labeled graph} is a graph with an application from its edges to $X$. The label of a path is the word on $X$ obtained by the concatenation of the labels of its edges from the first to the last.

  In a planar 2-cell complex every edge can belong to at most two faces (we do not consider the unbounded exterior region as a face). The \textit{boundary} of a planar 2-cell complex is the set of edges which do not belong to two faces, that is they belong to only one face or to no one. These edges are said \textit{exterior} and a path containing only exterior edges is said an \textit{exterior path}.

Let $\mathcal{P}=< X \, | \, R >$ be a monoid presentation. A \textit{van Kampen diagram} for $\mathcal{P}$ is a planar and simply connected $X$-labeled 2-cell complex $\mathcal{C}$ such that \begin{enumerate}

  \item $\mathcal{C}$ has exactly one source and one sink;
  
  \item either there is a simple cycle going from the source to the sink of $\mathcal{C}$; or there are two exterior paths going from the source to the sink of $\mathcal{C}$ whose intersection does not contain any edge belonging to a face;
  
  \item the subgraph constituted by a face of $\mathcal{C}$: either is a simple cycle and if $u$ is its label then $(u, 1)$ is a defining relation; or has exactly one source and one sink and has exactly two simple paths going from the source to the sink and if $u$ and $v$ are the labels of these paths then $(u, v)$ is a defining relation. 
    \end{enumerate}

We observe that there can be more than one pair of exterior paths in $\mathcal{C}$ verifying 2, but we assume that we have fixed one of these pairs. If $u$ and $v$ are the labels of these exterior paths then we say that $\mathcal{C}$ is a \textit{van Kampen diagram for $(u,v)$}.

    We have the following result (see The. 3.2 of \cite{Rem2} or The. 1.7.2 of \cite{Higg}):
    
\begin{theorem} \label{remhig} Let $\mathcal{P}=< X \, | \, R >$ be a monoid presentation, let $u$ and $v$ be words on $X$ and let $k$ be a natural number. Then there is a $k$ steps derivation from $u$ to $v$ if and only if there is a van Kampen diagram for $(u,v)$ with $k$ faces. \end{theorem}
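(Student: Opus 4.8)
The plan is to prove both implications simultaneously by induction on $k$, using the fact (Remark \ref{raven}) that a $k$-step derivation is a $(k-1)$-step derivation followed by a single one-step $R$-derivation, and matching this on the geometric side with a surgery that adds or removes exactly one face. For the base case $k=0$: a $0$-step derivation is precisely the case $u=v$. A van Kampen diagram with no faces is, by condition 3, a complex in which no edge lies on a face; being planar, simply connected and $2$-cell with no $2$-cells, it is a tree, so the two distinguished exterior paths from source to sink of condition 2 must coincide and hence carry the same label, forcing $u=v$. Conversely, when $u=v$ the single directed path labeled $u$ is a van Kampen diagram for $(u,u)$ with zero faces. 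Thus both directions hold for $k=0$.

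For the direction from derivations to diagrams, suppose $(u=a_0, a_1, \ldots, a_k=v)$ is a $k$-step derivation. By induction there is a van Kampen diagram $\mathcal{D}$ for $(u, a_{k-1})$ with $k-1$ faces. The final step is a one-step $R$-derivation, so $a_{k-1}=xay$ and $v=xby$ for some $x,y\in\mathcal{M}(X)$ and $(a,b)\in R$, and the boundary path of $\mathcal{D}$ labeled $a_{k-1}$ decomposes into directed subpaths labeled $x$, $a$, $y$. I would glue onto $\mathcal{D}$ a single new $2$-cell encoding $(a,b)$: when $a$ and $b$ are both nonempty the cell is a bigon with two simple directed paths, one labeled $a$ (identified with the subpath labeled $a$ on $\partial\mathcal{D}$) and one labeled $b$; when $b$ is empty, so $(a,1)\in R$, the cell is a simple cycle labeled $a$ attached along that subpath, and symmetrically if $a$ is empty. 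The resulting complex $\mathcal{C}$ is again planar, simply connected, with a single source and sink, and its two distinguished exterior paths are now labeled $u$ and $v$; hence it is a van Kampen diagram for $(u,v)$ with $k$ faces.

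For the reverse direction, suppose $\mathcal{C}$ is a van Kampen diagram for $(u,v)$ with $k\ge 1$ faces. The idea is to \emph{peel off} a face meeting the boundary. Using planarity and the directed structure, I would locate a face $F$ one of whose boundary arcs is a directed subpath of the distinguished exterior path labeled $v$; let $b'$ be the label of that arc and $a'$ the label of the complementary side of $\partial F$, so that $(a',b')\in R$ (up to orientation). Removing $F$ together with its exterior edges gives a complex $\mathcal{C}'$ with $k-1$ faces. After checking that $\mathcal{C}'$ remains planar, simply connected and with a unique source and sink, it is a van Kampen diagram for $(u,v')$, where $v'$ is obtained from $v$ by replacing the subpath $b'$ with $a'$; by construction $(v',v)$ is a one-step $R$-derivation. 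By induction there is a $(k-1)$-step derivation from $u$ to $v'$, and appending this last step yields a $k$-step derivation from $u$ to $v$.

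The hard part, and the place where the directed (semigroup) setting genuinely differs from the classical group case, is the peeling step: guaranteeing that a removable boundary face always exists while \emph{preserving} all of conditions 1, 2 and 3, in particular the orientation data (unique source and sink) and the structural requirement on the exterior paths. One must exclude configurations in which faces meet the boundary only in isolated vertices or along non-directed arcs, and verify that deleting $F$ neither creates a spurious source or sink nor disconnects the boundary. This is exactly the content of the directed van Kampen lemma of Remmers and Higgins, so in a self-contained treatment I would concentrate the real work on a careful analysis of the boundary as a directed cycle, proving existence of a removable boundary face; by contrast the gluing direction is essentially routine once the surgery is set up.
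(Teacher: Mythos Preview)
Your forward direction (derivation $\Rightarrow$ diagram) is essentially identical to the paper's: induction on $k$, with a simple labeled path as the base case and, for the inductive step, gluing a single bigon or cycle face encoding the defining relation $(a,b)$ along the subpath labeled $a$ of the current exterior path. The paper in fact proves \emph{only} this direction, stating explicitly ``We will prove one direction of the theorem'' and citing Remmers \cite{Rem2} and Higgins \cite{Higg} for the full result.

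You go further than the paper by sketching the converse via a face-peeling argument, and you correctly isolate where the real work lies: finding a face abutting the $v$-side of the boundary along a directed arc, so that its removal preserves conditions 1--3 (unique source/sink, the structure of the two exterior paths). You are right that this is precisely the content of the directed van Kampen lemma of Remmers--Higgins, and that the gluing direction is routine by comparison. So your proposal is correct and agrees with the paper on the part the paper actually proves; your additional outline of the reverse implication is a sound supplement, with the honest caveat that the existence of a removable boundary face in the directed setting is what needs the cited machinery.
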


We will prove one direction of the theorem. Let $(u, u_1, \cdots, u_k)$ be a derivation. We show how to associate with it in a standard way a van Kampen diagram in such a way that if $k\geqslant 1$ then there are two exterior simple paths going from the source to the sink of $\mathcal{C}$ and they are labeled by $u$ and $u_k$.

 Let $k=0$, that is we have the trivial relation $(u,u)$. In this case if $u=x_1 \cdots x_m$, we consider the graph which is a simple path with $m$ edges each labeled by $x_i$ for $i=1, \cdots, m$,
 
 \begin{picture}(180,55)(6,14)

%\put(173,40){\oval(76,10)}

\put(135,40){\line(1,0){88}} % linea orizzontale sx

%\put(147,39){\oval(16,5)[tl]}

\put(138,37){\vector(1,0){9}}  % vettore

\put(135,40){\circle{4}}   % cerchio vuoto

\put(135,40){\circle*{2}}  % punto iniziale

\put(158,40){\circle*{2}}   % dopo x_1

\put(223,40){\circle*{2}}  % punto finale

\put(223,40){\circle{4}}   % cerchio vuoto

\put(183,40){\circle*{2}}   % punto dopo x_2

%\put(187,41){*}  % asterisco dopo x_2

%\put(187,25){*}  % asterisco dopo x_2^{-1}

\put(197,40){\circle*{2}}  % punto dopo asterisco

\put(143,45){$x_1$}

%\put(140,28){$x_1^{-1}$}

\put(165,45){$x_2$}

%\put(163,28){$x_2^{-1}$}

\put(202,45){$x_m$}

%\put(198,28){$x_m^{-1}$}

%\put(171,20){*}
%

\end{picture}

 with the arrow indicating the direction of the path. Suppose to have associated a 2-cell complex $\mathcal{C}$ with the derivation $(u, u_1, \cdots, u_{k-1})$. Let $u_{k-1}=v a w$ and $u_k=v b w$ with $(a, b)$ a defining relation. By induction hypothesis we can assume that we have fixed an exterior path of $\mathcal{C}$ labeled $u_{k-1}$ going from the source to the sink of $\mathcal{C}$. 
 
 Let $a=x_1 \cdots x_m$, $b=y_1 \cdots y_n$ and let $F$ be the following 2-cell complex:

 \begin{picture}(180,100)(0,-7)

\put(180,40){\circle{40}}

\put(180,20){\circle{4}} % source

\put(180,60){\circle{4}} % sink

\put(165,15){$x_1$}

\put(186,15){$y_1$}

\put(161,62){$x_m$}

\put(186,62){$y_n$}

\put(177,37){\oval(25,25)[bl]}  % freccia sx

\put(165,37){\vector(0,1){4}} % punta sx

\put(184,37){\oval(25,25)[br]}  % freccia dx

\put(196,37){\vector(0,1){4}} % punta dx

\put(166,26){\circle*{2}}

\put(160,40){\circle*{2}}

\put(200,40){\circle*{2}}

\put(180,60){\circle*{2}}

\put(180,20){\circle*{2}}

\put(194,54){\circle*{2}}

\put(194,26){\circle*{2}}

\put(166,54){\circle*{2}}

%\put(150,48){$x_1$}

%\put(150,28){$x_m$}

\end{picture}

 Then we associate with the derivation $(u, \cdots, u_k)$ the complex obtained by \textit{adjoining} (see \cite{adj}) $F$ to $\mathcal{C}$ \textit{along} the bijection between the exterior paths of $F$ and $\mathcal{C}$ labeled by $a$, that is by “gluing” $F$ and $\mathcal{C}$ along these paths.

 If $a$ or $b$ are equal to 1, then the source and the sink of $F$ do coincide and the boundary of $F$ is a simple cycle. In particular if $a=1$ then $F$ is glued to $\mathcal{C}$ along a single vertex; if $b=1$ then the two vertices of $\mathcal{C}$ which are the initial and the final vertex of the exterior path labeled by $a$ are made to coincide and a new face is created. 
 
 \smallskip

Let $\mathcal{P}=< X \, | \, R >$ be a group presentation and let $\mathcal{P}'$ be the monoid presentation corresponding to $\mathcal{P}$. Then the van Kampen diagrams for $\mathcal{P}$ are as those associated with $\mathcal{P}'$ but with the following modifications: \begin{itemize}

  \item we consider only diagrams associated with relations of the form $(1,v)$, that is we consider only derivations whose initial word is $u=1$;
  
  \item there is no face for a defining relation $(xx^{-1},1)$, that is in this case the two edges labeled by $x$ and $x^{-1}$ are made to coincide and thus identified;
  
  \item if a defining relation $(xx^{-1},1)$ is applied and the subpaths labeled by $x$ and $x^{-1}$ to be identified are cycles then the portions of the graph comprised in these cycles are eliminated in order to keep the graph planar.

\end{itemize}

For further details about van Kampen diagrams for group presentations see the references cited at the beginning of the section.

\section{The Word Problem}
   
This section deals with the word problem for monoid presentations. Deeper results, but valid only for group presentations, can be found in \cite{VaccAlgComb}.

 \begin{definition} \rm \label{}   Let $\mathcal{P}=<  X \, | \, R  >$ be a presentation for the monoid $M$. We say that $\mathcal{P}$ \textit{has a solvable Word Problem} if given $u, v\in \mathcal{M}(X)$ there exists an algorithm which decides whether the elements of $M$ represented by $u$ and $v$ are equal. This is equivalent to say that the set of relations of $\mathcal{P}$ is decidable. 
 
 We say that a presentation \textit{has a solvable Word Search Problem} if it has a solvable Word Problem and for any relation $(u, v)$ there exists an algorithm which finds a derivation from $u$ to $v$.  
 
 The same definition holds for group presentations with $\mathcal{M}(X)$ replaced by $\mathcal{F}(X)$. \end{definition}   
   
We observe that if $\mathcal{P}$ is a group presentation then $\mathcal{P}$ has a solvable Word Problem if and only if the same is true for the monoid presentation corresponding to $\mathcal{P}$ as in Definition \ref{segr}. Thus all the results which we will prove in the case of monoid presentations hold also for group presentations with the obvious modifications.

Moreover if $\mathcal{P}$ is a group presentation for the group $G$ then we have the equality $u=v$ in $G$ if and only if $uv^{-1}=1$ in $G$, that is the Word Problem is solvable for a group presentation if and only if the set of relators of $\mathcal{P}$ is decidable.

We recall that $\langle R \rangle$ denotes the congruence generated by $R$.

  \begin{proposition} \label{} Let $\mathcal{P}=<  X \, | \, R  >$ be a monoid presentation and let $\mathcal{M}(X)$ be enumerable (in particular let $X$ be enumerable). Then the Word Problem for $\mathcal{P}$ is solvable if and only if the monoid $\mathcal{M}(X)/\langle R \rangle$ is enumerable.  \end{proposition}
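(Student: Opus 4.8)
Throughout set $M:=\mathcal M(X)/\langle R\rangle$ and let $\pi\colon\mathcal M(X)\to M$ be the canonical projection. The plan is to play the two conditions off against the single equivalence already noted after the definition of the Word Problem: $\mathcal P$ has a solvable Word Problem if and only if the set of relations $\langle R\rangle$ is decidable as a subset of the enumerable set $\mathcal M(X)^2$. To connect this with enumerability of $M$, I read the latter through Definition \ref{enume}. Since $M$ is by construction the quotient of the enumerable set $\mathcal M(X)$ by $\pi$, the natural enumeration of its elements is obtained by fixing an enumeration $w_0,w_1,\dots$ of $\mathcal M(X)$ (which exists by hypothesis) and outputting $\pi(w_0),\pi(w_1),\dots$; this clearly lists every element of $M$. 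The only substantive clause of Definition \ref{enume} for this enumeration is that one be able to decide, for all $m,n$, whether $\pi(w_m)=\pi(w_n)$, and $\pi(w_m)=\pi(w_n)$ holds precisely when $(w_m,w_n)\in\langle R\rangle$. Thus ``output-equality is decidable for the canonical enumeration of $M$'' and ``$\langle R\rangle$ is decidable'' are literally the same statement.

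Granting this, the forward implication is immediate: if the Word Problem is solvable then $\langle R\rangle$ is decidable, so the algorithm deciding $(w_m,w_n)\in\langle R\rangle$ is exactly the one Definition \ref{enume} requires, and $\pi(w_0),\pi(w_1),\dots$ witnesses that $M$ is enumerable.

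For the converse I would start from an enumeration of $M$ witnessing enumerability and must decide, for arbitrary $u,v\in\mathcal M(X)$, whether $\pi(u)=\pi(v)$. Here I use that $\mathcal M(X)$ is enumerable (Remark \ref{yroth}), so by Definition \ref{enume} equality of words presented as outputs of its enumeration is decidable; consequently, given $u$ I can scan $w_0,w_1,\dots$ until a match $u=w_i$ is found, which terminates because $u\in\mathcal M(X)=\{w_n\}$, and similarly obtain $j$ with $v=w_j$. Feeding the indices $i,j$ to the decision procedure for output-equality of the given enumeration of $M$ decides whether $\pi(w_i)=\pi(w_j)$, i.e.\ whether $u=v$ in $M$. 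Hence $\langle R\rangle$ is decidable and the Word Problem is solvable.

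The step I expect to be the real obstacle is the converse, and more precisely pinning down what ``$M$ enumerable'' is allowed to mean. The argument above is painless exactly because the enumeration of the quotient is inherited from that of $\mathcal M(X)$ through $\pi$, so that ``output-equality'' and ``being a relation'' coincide as predicates; the only genuine content is then the locating step, which rests on decidability of word equality and hence on the decidable-equality clause built into the notion of an enumerable set $X$. If instead one allowed $M$ to be enumerated by representatives unrelated to $\mathcal M(X)$, then locating a given word would itself amount to semi-deciding equality in $M$, and to run the required dovetailed search one would first need the set of relations to be enumerable (as provided by Corollary \ref{avig} under the additional hypothesis that $R$ is enumerable). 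I would therefore be careful to phrase enumerability of the quotient as enumerability under the projected enumeration, for which the proof is self-contained.
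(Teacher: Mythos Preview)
Your approach is essentially the same as the paper's: fix an enumeration $w_0,w_1,\ldots$ of $\mathcal M(X)$, consider the projected sequence $\pi(w_0),\pi(w_1),\ldots$, and observe that the equality-of-outputs clause in Definition~\ref{enume} for this sequence is literally the decidability of $\langle R\rangle$, i.e.\ the Word Problem. The paper compresses both directions into this single observation; your version spells out the locating step and, usefully, flags the interpretive issue that ``$M$ enumerable'' must be read as ``the projected enumeration satisfies Definition~\ref{enume}'' rather than ``some abstract enumeration exists''---a point the paper leaves implicit.
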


\begin{proof} We have that $\mathcal{M}(X)=\{w_n : n \in \mathbb{N}\}$, where there is an algorithm whose $n$-th output is $w_n$. Consider the algorithm whose $n$-th output is the congruence class of $w_n$; by Definition \ref{enume} this algorithm enumerates $\mathcal{M}(X)/\langle R \rangle$ if and only if we can determine whether two elements of $\mathcal{M}(X)/\langle R \rangle$ are equal or not, that is if and only if the Word Problem for $\mathcal{P}$ is solvable.  \end{proof}

 From now on we assume that the set of defining relators of any presentation is symmetric. By Remark \ref{raven} we have that if $\mathcal{P}=<  X \, | \, R  >$ is a presentation for the monoid $M$ and if $u, v \in \mathcal{M}(X)$, then $u=v$ in $M$ if and only if there is an $R$-derivation from $u$ to $v$.

 \begin{remark} \label{spesp}  \rm  Let $\mathcal{P}=< X\, | \, R >$ be a presentation such that the set of $R$-derivations is enumerable (in particular let $X$ and $R$ be enumerable, see Corollary \ref{avig}); then the solvability of the Word Problem is equivalent to that of the Word Search Problem. Indeed let $(u, v)$ be a relation; then the algorithm enumerating the $R$-derivations will give after a finite time an output which is a derivation from $u$ to $v$.

Anyway the complexity of the Word Search Problem can be much greater than that of the simple Word Problem. Madlener and Otto showed in (\cite{MO}, Cor. 6.14) group presentations with Word Problem solvable in polynomial time and with Word Search Problem\footnote{Madlener and Otto call \textit{pseudo-natural algorithm} an algorithm for solving the Word Search Problem} arbitrarily hard. Namely for every $m\geqslant 3$ they constructed a group $G(m)$ with these properties: the complexity of the Word Problem for $G(m)$ is at most polynomial, that of the Word Search Problem is bounded above by a function in the \textit{Grzegorczyk class} $E_m$ (see Ch. 12 of \cite{EC}) but by no function in the Grzegorczyk class $E_{m-1}$. \end{remark}

We end this section with a definition of \textit{equivalence} of functions often used in Combinatorial Group Theory.

\begin{definition} \label{equiv} \rm Let $p$ be a non-zero natural number and let $f, g: \mathbb{R}^p_+ \rightarrow \mathbb{R}^p_+$ be two non-decreasing\footnote{a function $f: \mathbb{R}^p_+ \rightarrow \mathbb{R}^p_+$ is non-decreasing if $f(x_1, \cdots, x_p)\leqslant f(y_1, \cdots, y_p)$ when $x_i \leqslant y_i$ for every $i=1, \cdots, p$} functions. We write $f \preceq g$ if there exists a positive constant $\alpha$ such that 
          $$f(x_1, \cdots, x_p)\leqslant \alpha g(\alpha x_1, \cdots, \alpha x_p) + \alpha (x_1+ \cdots+ x_p)$$
   for every non-zero natural numbers $x_1, \cdots, x_p$. We say that $f$ and $g$ are \textit{equivalent} if $f \preceq g$ and $g \preceq f$ and in this case we write $f \simeq g$.
 
Let $f, g: \mathbb{R}^p_+ \rightarrow \mathbb{R}^p_+$ be two functions. We write $f \preceq_{\texttt{s}} g$ if there exists a positive constant $\alpha$ such that 
     $$f(x_1, \cdots, x_p)\leqslant \alpha g(x_1, \cdots, x_p)$$
    for every non-zero natural numbers $x_1, \cdots, x_p$. 
   We say that $f$ and $g$ are \textit{strongly equivalent} if $f \preceq_{\texttt{s}}g$ and $g \preceq_{\texttt{s}} f$ and in this case we write $f \simeq_{\texttt{s}} g$. \end{definition}

If $f \leqslant g$ then $f \preceq_{\texttt{s}}g$; if $f \preceq_{\texttt{s}}g$ then $f \preceq g$. If $f$ is a function from $\mathbb{N}^p_+$ to $\mathbb{R}^p_+$ then we consider $f$ as defined on $\mathbb{R}^p_+$ by assigning the value $f(n_1, \cdots, n_p)$ to every $(x_1, \cdots, x_p)$ such that $x_i \in ]n_i, n_i+1[$ for $i=1, \cdots, p$.

 \section{Derivational length and derivational work}

  \begin{definition} \label{derlen}  \rm Let $\mathcal{P}=< X\, | \, R >$ be a monoid presentation and let $(u,v)$ be a relation of $\mathcal{P}$. The \textit{derivational length of $(u,v)$} is defined as
       $$\textsf{dl}(u,v)=\textsf{min}\{k\in \mathbb{N} : \mathrm{there \, \,  is \,  \,  a} \, \, k \, \, \mathrm{steps} \, \, R\mathrm{-derivation \, \, from} \, \, u \, \, \mathrm{to} \, \, v\}.$$   
      Let $\mathcal{P}$ be finitely generated and let $n_1$ and $n_2$ be natural numbers; the \textit{derivational length of $\mathcal{P}$ at $(n_1, n_2)$} is
   $$\textsf{DL}(n_1, n_2):=\textsf{max}\{\textsf{dl}(u_1,u_2): (u_1, u_2)   \, \, \mathrm{is \, \,  a \, \, relation},  \,|u_1|\leqslant n_1, \,  |u_2|\leqslant n_2\}.$$  \end{definition}

 Since we always assume that $R$ is symmetric then $\textsf{dl}(v,u)=\textsf{dl}(u,v)$. Moreover we have that $\textsf{dl}(u, u)=0$.
By Theorem \ref{remhig} we have that $\textsf{dl}(u,v)$ is the minimal number of faces of a van Kampen diagram for $(u,v)$.

 The derivational length was introduced by K. Madlener and F. Otto in (\cite{MO}, Sec. 3) with the name of \textit{derivational complexity}.

   \begin{remark} \label{} \rm Obviously $\textsf{DL}(n_2, n_1)=\textsf{DL}(n_1, n_2)$. If $n'_1\leqslant n_1$ and $n'_2\leqslant n_2$ then $\textsf{DL}(n'_1, n'_2) \leqslant \textsf{DL}(n_1, n_2)$.

If $\mathcal{P}$ is the monoid presentation obtained from a group presentation then 
      $$\textsf{dl}(uv^{-1},1) \leqslant \textsf{dl}(u,v) + |v|$$
and thus 
     $$\textsf{DL}(m+ n, 0) \leqslant \textsf{DL}(m, n) +\textsf{min}(m,n).$$
 Indeed if $(u, u_1, \cdots, u_{k-1}, v)$ is a $k$ steps derivation then also
            $$(uv^{-1}, u_1v^{-1}, \cdots, u_{k-1}v^{-1}, vv^{-1})$$
  is a $k$ steps derivation and obviously there is a derivation of length $|v|$ from $vv^{-1}$ to 1.
    \end{remark}

       \begin{definition} \label{omega} \rm Let $\mathcal{P}:=< X\, | \, R >$ be a monoid presentation and let $(u,v)$ be a defining relation. The \textit{(derivational) work of $(u,v)$} is
                $$\textsf{work}(u,v)=|u|+|v|.$$          
Let $(a, b)$ be a 1-step derivation by means of the defining relation $(u,v)$, that is there exist words $a_1, a_2, b_1$ and $b_2$ such that $a=a_1 u a_2$ and $b=b_1 v b_2$. The \textit{work of $(a,b)$} is defined as the work of $(u,v)$. The \textit{(derivational) work of} a $k$ steps derivation $(u_0, u_1, \cdots, u_k)$ is defined as 
                         $$\sum_{i=1}^k \textsf{work}(u_{i-1}, u_i).$$
Let $(u_1, u_2)$ be a relation. The \textit{(derivational) work of $(u_1, u_2)$} is defined as the minimal work of a derivation from $u_1$ to $u_2$. If $n_1$ and $n_2$ are natural numbers we define the \textit{(derivational) work of $\mathcal{P}$ at $(n_1, n_2)$} as
  $$\Omega(n_1, n_2)=\textsf{max}\{\textsf{work}(u_1, u_2)\, : \, (u_1, u_2) \, \,\textrm{is a relation}, \, \, |u_1|\leqslant n_1, |u_2|\leqslant n_2\}.$$
          \end{definition}

The function \textsf{work} has been introduced by J.-C. Birget in (\cite{Birg}, 1.4).

We can define the work of a relation in terms of van Kampen diagrams (see Section \ref{VKdfsp}). Indeed the work of a defining relation is equal to the number of edges of the van Kampen diagram associated with it. Let $(u_0, u_1, \cdots, u_k)$ be a $k$ steps derivation and let $\mathcal{C}$ be the van Kampen diagram associated with it; then the work of $(u_0, u_1, \cdots, u_k)$ is equal to the number of edges of $\mathcal{C}$ belonging to a single face plus twice the number of edges of $\mathcal{C}$ belonging to two faces.

   \begin{remark} \label{smile} \rm Let $h$ be the minimal length\footnote{we recall that if $(a,b)\in R$, then the length of $(a,b)$ is $|a|+|b|$} of elements of $R$; then for every relation $(a,b)$ we have that $h \textsf{dl}(a,b)\leqslant \textsf{work}(a,b)$. This implies that 
                     $$h \textsf{DL}\leqslant \Omega.$$
      Let the length of the elements of $R$ be bounded above by a constant $h'$ (in particular let $R$ be finite); then $\textsf{work}(a,b)\leqslant h' \textsf{dl}(a,b)$ and $\Omega \leqslant h' \textsf{DL}$, thus 
             $$h \textsf{DL} \leqslant \Omega \leqslant h' \textsf{DL};$$ 
 therefore in this case $\textsf{DL}$ and $\Omega$ are strongly equivalent (Definition \ref{equiv}).      
       In particular if all the elements of $R$ have the same length and if $h$ is this length then $\textsf{work}(a,b)= h \textsf{dl}(a,b)$ and $\Omega = h \textsf{DL}$.
       \end{remark}

   \begin{remark} \label{} \rm Let $\mathcal{P}:=< X\, | \, R >$ be a group presentation and let $\mathcal{P}'$ be the monoid presentation corresponding to $\mathcal{P}$ (Definition \ref{segr}). In (\cite{MO2}, Def. 3.1), Madlener and Otto have introduced a function, which we call \textit{strong derivational length} and denote \textsf{sdl}, in the following way: 
           $$\textsf{sdl}(1, xx^{-1})=1$$
if $x\in X\cup X^{-1}$ and 
   $$\textsf{sdl}(u, v)=1+|v|$$
if $(u, v)$ is a defining relation not equal to $(1, xx^{-1})$ for every $x \in X\cup X^{-1}$. Then $\textsf{sdl}$ is extended to the whole set of relations in the same way as this is done for the function $\textsf{work}$.

It is easy to see that for every relation $(a,b)$ we have the following inequality
            $$\textsf{dl}(a,b) \leqslant \textsf{sdl}(a,b) \leqslant \textsf{dl}(a,b)+ \textsf{work}(a,b).$$
   Analogously we define for natural numbers $n_1, n_2$         
      $$\textsf{SDL}(n_1, n_2):=\textsf{max}\{\textsf{sdl}(u_1,u_2): (u_1, u_2)   \, \, \mathrm{is \, \,  a \, \, relation},  \,|u_1|\leqslant n_1, \,  |u_2|\leqslant n_2\}$$       
     and by the preceding inequality we have that       
        $$\textsf{DL} \leqslant \textsf{SDL} \leqslant \textsf{DL} + \Omega.$$
     With the same argument of Proposition \ref{gersho} one proves that $\mathcal{P}$ has a solvable Word Problem if and only if \textsf{SDL} is computable.     
      \end{remark}

\begin{problem} \label{} Find an upper bound for $\Omega$ in function of $\textsf{SDL}$.
         \end{problem}

Given a non-negative rational number $n$ we denote by $\lfloor n \rfloor$ \textit{the integer part of $n$}, that is the biggest natural number less or equal to $n$.

\begin{exercise} \label{vugar} Prove that $\textsf{DL}(m, n)=\lfloor m/2 \rfloor+\lfloor n/2 \rfloor$ for the standard monoid presentation of a free group, that is for the presentation $< X\cup X^{-1} \, | \, \texttt{F} >$ where $\texttt{F}$ is as in Definition \ref{stop}.
         \end{exercise}

\begin{exercise} \label{mov} Let $\mathcal{CM}(X)$ be the free commutative monoid on $X$ and consider the presentation given in Definition \ref{comm}. Let $m, n$ be natural numbers such that $m\geqslant n$. Prove that: \begin{enumerate}

\item $\displaystyle \textsf{DL}(m, n)=\frac{(n-1)n}{2}$ \, if $|X| \geqslant n$ (in particular if $X$ is infinite);

\item $\displaystyle \textsf{DL}(m, n)=h(n-h) + \frac{(h-1)h}{2}$ \, if $|X|=h+1<n$.
       \end{enumerate}  \end{exercise}

By Remark \ref{smile} we have that $\Omega = 2 \textsf{DL}$ for the presentation of Exercise \ref{vugar} and $\Omega = 4 \textsf{DL}$ for that of Exercise \ref{mov}.

\begin{proposition} \label{Derv} Let $\mathcal{P}:=< X\, | \, R >$ be a monoid presentation, let $u$ be a word on $X$ and let $k$ be a natural number. \begin{enumerate}

  \item If $\mathcal{P}$ is finitely generated decidable then the set of derivations from $u$ of work equal to $k$ is finite and effectively computable;

  \item if $\mathcal{P}$ is finitely related then the set of $k$ steps derivations from $u$ is finite and effectively computable.
   \end{enumerate}
         \end{proposition}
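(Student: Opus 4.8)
The plan is to present each of the two sets as the collection of root-to-leaf (respectively root-to-depth-$k$) paths of a finite tree of derivations rooted at $u$, which can be built and traversed effectively. The common starting point is the elementary length estimate for a single step: if $(w,w')$ is a one-step derivation by means of the defining relation $(a,b)$, say $w=xay$ and $w'=xby$, then $|w'|=|w|-|a|+|b|\leq |w|+|b|\leq |w|+\textsf{work}(w,w')$. Summing along a derivation $(u_0,\dots,u_m)$ with $u_0=u$ gives $|u_i|\leq |u|+w_i$, where $w_i$ is the work of the first $i$ steps, so the current word stays short as long as the accumulated work stays bounded.

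For Part 1 I would exploit finite generation together with the prescribed work. Since every word occurring in a derivation from $u$ of work $k$ has length at most $|u|+k$ and $X$ is finite, all such words range over a finite, explicitly listable set. Assuming, as is implicit, that $R$ contains no relation of work $0$ (that is, no pair $(1,1)$, which would otherwise permit arbitrarily long stuttering derivations of work $0$), each step contributes work at least $1$, so a derivation of total work $k$ has at most $k$ steps. The tree is then built node by node: at a node carrying the current word $w$ and remaining budget $r$, I enumerate the one-step derivations $w\mapsto w'$ of work at most $r$ by running through the finitely many factorizations $w=xay$ and, for each, the finitely many words $b$ on the finite alphabet $X$ with $|b|\leq r-|a|$, testing $(a,b)\in R$ by decidability of $R$. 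This yields finite, computable branching; since the depth is at most $k$, the tree is finite, and collecting the paths of accumulated work exactly $k$ exhibits the desired set as finite and effectively computable.

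For Part 2 the two hypotheses trade places: $X$ may be infinite but $R$ is finite, and the number of steps is now fixed at $k$, so only the branching must be controlled. Given the current word $w$, its one-step derivations are found by running through the finitely many relations $(a,b)\in R$ and, for each, through the finitely many occurrences of $a$ as a factor of $w$; this is pure string matching and needs no hypothesis on $X$ beyond the ability to compare the letters actually present. Indeed every word occurring in a $k$-step derivation from $u$ uses only letters from the finite set $Y$ consisting of the letters of $u$ and the finitely many letters appearing in $R$, so equality of letters is decidable on the relevant alphabet $Y$, and the lengths stay bounded by $|u|+k\cdot\max_{(a,b)\in R}|b|$. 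Thus each node has finite, computable branching, the tree of depth exactly $k$ is finite, and its root-to-depth-$k$ paths form a finite and effectively computable set.

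The real content of both parts is the finiteness of the search tree, and the main obstacle is that in each case one of the two data defining $\mathcal{P}$ is allowed to be infinite. In Part 1 it is $R$: fixing the work $k$ must be shown to render only finitely many defining relations relevant, which is precisely where finiteness of $X$ (bounding the candidate replacement words $b$) and decidability of $R$ (selecting the genuine relations among them) are jointly indispensable, together with the exclusion of work-$0$ relations to bound the number of steps. In Part 2 it is $X$, and the crux is that only the finitely many letters occurring in $u$ and in $R$ can ever appear, so the computation in fact takes place over a finite alphabet even though $X$ is not assumed finite.
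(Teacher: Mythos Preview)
Your proof is correct and follows essentially the same approach as the paper: both arguments reduce to showing that the set of one-step derivations from a given word (with bounded work, respectively with $R$ finite) is finite and effectively computable, and then iterate. Your version is slightly more explicit in recording the length bound $|u_i|\le |u|+k$, in noting the caveat about the degenerate relation $(1,1)$ (which the paper silently ignores), and in observing for Part 2 that only the finitely many letters from $u$ and $R$ can occur; these are welcome clarifications but do not change the structure of the argument.
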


\begin{proof} \begin{enumerate}
 
  \item Let $X$ be finite and $R$ decidable. Let $h \leqslant k$, let $n\leqslant \textsf{min}\{|u|, h\}$ and find all the subwords of $u$ of length $n$. If $a$ is any of these subwords, consider all the words on $X$ of length up to $k-|a|$; if $b$ is any of these words decide whether $(a, b)$ is a defining relation. In the affirmative case consider the 1-step derivation from $u$ by means of $(a, b)$. All the 1-step derivations from $u$ of work up to $k$ are obtained this way; moreover since $X$ is finite, the number of these words $b$ is finite and effectively computable.
  
   If $v$ is a word obtained from $u$ by a derivation of work $k$, then there exist words $u_0=u, u_1, \cdots, u_m=v$ such that $(u_i, u_{i+1})$ is a 1-step derivation of work $k_i$ and $k_1+\cdots+k_m=k$. Since these derivations can be all obtained with the procedure described above, then the set of derivations from $u$ of work equal to $k$ is finite and effectively computable.

  \item Let $\mathcal{P}$ be finitely related and let $q=|R|$; it is sufficient to prove that the set of 1-step derivations from $u$ is finite and effectively computable. For every $(a, b)\in R$, check whether $a$ is a subword of $u$ and for any of these cases apply the relation $(a, b)$ to $u$. There are thus at most $q|u|$ words to compute and this proves the claim. 
   \end{enumerate}
             \end{proof}

\begin{lemma} \label{istar} Let $\mathcal{P}$ be an enumerably related monoid presentation with solvable Word Problem and let $\textsf{DL}$ and $\Omega$ be its derivational length and work. If $\mathcal{P}$ is finitely generated then $\textsf{DL}$ and $\Omega$ are bounded above by computable functions; if $\mathcal{P}$ is finitely generated decidable then $\Omega$ is computable; if $\mathcal{P}$ is finite then $\textsf{DL}$ is computable.
\end{lemma}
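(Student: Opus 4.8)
The plan is to exploit that in every case $X$ is finite, so for fixed $(n_1, n_2)$ there are only finitely many pairs $(u_1, u_2) \in \mathcal{M}(X)^2$ with $|u_1| \leqslant n_1$ and $|u_2| \leqslant n_2$, and these can be listed effectively; moreover, since the Word Problem is solvable, one can decide which of these finitely many pairs are relations. The three assertions then reduce to controlling, for each such relation, the minimal number of steps (for $\textsf{DL}$) or the minimal work (for $\Omega$) of a derivation realizing it, and taking the maximum over the finite list. What distinguishes the three cases is whether one can only bound these minima from above or compute them exactly.

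For the first assertion, a finitely generated enumerably related presentation has an enumerable set of derivations by Corollary \ref{avig}, so by Remark \ref{spesp} the Word Search Problem is solvable: for any relation $(u_1, u_2)$ an algorithm produces \emph{some} derivation from $u_1$ to $u_2$. I would define $F(n_1, n_2)$ (respectively $G(n_1, n_2)$) by running this algorithm on each of the finitely many relations $(u_1, u_2)$ with $|u_1|\leqslant n_1$, $|u_2|\leqslant n_2$ and recording the number of steps (respectively the work) of the derivation it returns, then taking the maximum. Since every invocation halts and the list is finite and effectively computable, $F$ and $G$ are computable; and because the derivation returned need not be minimal, one gets only $\textsf{DL} \leqslant F$ and $\Omega \leqslant G$, which is exactly the claimed upper bound.

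For the remaining two assertions I would instead compute the minima exactly by a bounded search organized by increasing cost. If $\mathcal{P}$ is finitely generated decidable, then by part 1 of Proposition \ref{Derv} the set of derivations from $u_1$ of work equal to $k$ is finite and effectively computable; hence, for a relation $(u_1, u_2)$, running through $k = 0, 1, 2, \ldots$ and testing whether some derivation of work $k$ from $u_1$ ends at $u_2$ returns, at the first success, the value $\textsf{work}(u_1, u_2)$. Taking the maximum over the finite list of relations yields $\Omega(n_1, n_2)$ exactly. The argument for the third assertion is identical, with work replaced by the number of steps and part 1 of Proposition \ref{Derv} replaced by part 2 (valid since a finite presentation is finitely related), producing $\textsf{dl}(u_1, u_2)$ and hence $\textsf{DL}(n_1, n_2)$.

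The point requiring care is the termination and minimality of the cost-increasing search in the last two cases. The search over $k$ halts precisely because $(u_1, u_2)$ is known to be a relation, so a derivation of some finite work (respectively length) exists; and searching in increasing order of $k$ guarantees that the first success gives the minimum. It is here that the solvability of the Word Problem is indispensable: it is what lets us restrict the search to genuine relations, since on a non-relation the search would run forever. The qualitative gap between the three cases is thus that Word Search alone yields only computable \emph{upper} bounds, whereas the finiteness statements of Proposition \ref{Derv}, available under decidability or finite relatedness, are what upgrade these to exact computations of $\Omega$ and $\textsf{DL}$.
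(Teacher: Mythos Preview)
Your proposal is correct and follows essentially the same approach as the paper: list the finitely many pairs, use the Word Search Problem (via Remark \ref{spesp}) to get computable upper bounds, and then invoke Proposition \ref{Derv} to upgrade to exact values under the stronger hypotheses. The only cosmetic difference is that in the last two assertions the paper first records the upper bound $g_{(u_1,u_2)}$ (respectively $f_{(u_1,u_2)}$) from the Word Search step and then searches $k$ only up to that bound, whereas you search $k=0,1,2,\ldots$ open-endedly and argue termination from the mere fact that $(u_1,u_2)$ is a relation; both are valid and the underlying structure is identical.
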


\begin{proof} Since the set of generators and that of defining relators are enumerable, then by Remark \ref{spesp} the solvability of the Word Problem implies that of the Word Search Problem. Let $\mathcal{P}$ be finitely generated and let $n_1$ and $n_2$ be naturals; then the number of pairs of words of length bounded by $n_1$ and $n_2$ respectively is finite. For any of these pairs of words we solve the Word Search Problem; this gives for any relation $(u_1,u_2)$ with $|u_1|\leqslant n_1$ and $|u_2|\leqslant n_2$ an upper bound $f_{(u_1,u_2)}$ for $\textsf{dl}(u_1,u_2)$ and an upper bound $g_{(u_1,u_2)}$ for $\textsf{work}(u_1,u_2)$. We can compute in a finite time the maxima of these bounds which are thus upper bounds for $\textsf{DL}(n_1,n_2)$ and $\Omega(n_1,n_2)$.

Let $\mathcal{P}$ be finitely generated decidable, let $n_1$ and $n_2$ be natural numbers and let $(u_1,u_2)$ be a relation with $|u_1|\leqslant n_1$ and $|u_2|\leqslant n_2$. By 1 of Proposition \ref{Derv}, for any $k=1, \cdots, g_{(u_1,u_2)}$ the derivations from $u$ of work equal to $k$ are finitely many and effectively computable. For at least one of these $k$'s, the word $v$ belongs to the set of derivations from $u$ of work equal to $k$ and the minimal $k$ for which this happens is equal to the derivational work from $u$ to $v$. Therefore we can compute $\Omega$ in a finite time and thus $\Omega$ is computable.

Let $\mathcal{P}$ be finite; then the proof that $\textsf{DL}$ is computable is the same as the preceding case with the difference that by 1 of Proposition \ref{Derv}, for any $k=1, \cdots, f_{(u_1,u_2)}$ the $k$ steps derivations from $u$ are finitely many and effectively computable.
      \end{proof}

A converse of Lemma \ref{istar} holds:

\begin{proposition} \label{gersho} \begin{enumerate}
  
  \item A finitely generated decidable monoid presentation has a solvable Word Problem if and only if $\Omega$ is computable, if and only if $\Omega$ is bounded above by a computable function; 
  
    \item a finite monoid presentation has a solvable Word Problem if and only if $\textsf{DL}$ is computable, if and only if $\textsf{DL}$ is bounded above by a computable function.\end{enumerate}
          \end{proposition}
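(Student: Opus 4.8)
The plan is to establish each part as a cycle of three implications, with essentially all of the new work lying in a single direction. Two of the three implications come for free: the direction ``solvable Word Problem $\Rightarrow$ function computable'' is exactly what Lemma \ref{istar} supplies (in the finitely generated decidable case for $\Omega$, in the finite case for $\textsf{DL}$), while ``function computable $\Rightarrow$ function bounded above by a computable function'' is trivial, since any function bounds itself. So it remains to prove the converse implication ``function bounded above by a computable function $\Rightarrow$ solvable Word Problem'', and this is where I would concentrate.

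For part 1, I would argue as follows. Assume $\mathcal{P}$ is finitely generated decidable and that $\Omega \leq \phi$ for some computable $\phi$. Given a pair of words $u, v \in \mathcal{M}(X)$, set $K := \phi(|u|, |v|)$ (rounded up to a natural number), a quantity we can compute. By part 1 of Proposition \ref{Derv}, for each $k \leq K$ the set of derivations from $u$ of work equal to $k$ is finite and effectively computable; taking the union over $k = 0, 1, \ldots, K$ produces the finite, effectively computable set of all words reachable from $u$ by a derivation of work at most $K$. The algorithm then checks whether $v$ occurs among these endpoints. Correctness rests on the a priori bound supplied by the hypothesis: if $(u,v)$ is a relation then $\textsf{work}(u,v) \leq \Omega(|u|,|v|) \leq \phi(|u|,|v|) = K$, so a witnessing derivation of work at most $K$ exists and is found; if no such derivation exists, then $(u,v)$ is not a relation at all. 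Either way the procedure decides equality, so the Word Problem is solvable. For part 2 I would run the identical scheme with $\textsf{DL}$ replacing $\Omega$ and ``number of steps'' replacing ``work'': assuming $\mathcal{P}$ finite and $\textsf{DL} \leq \psi$ with $\psi$ computable, I set $K := \psi(|u|,|v|)$ and invoke part 2 of Proposition \ref{Derv}---which needs only that $\mathcal{P}$ be finitely related---to obtain, for each $k \leq K$, the finite effectively computable set of $k$ step derivations from $u$. Again $v$ is reachable within $K$ steps precisely when $(u,v)$ is a relation, because $\textsf{dl}(u,v) \leq \textsf{DL}(|u|,|v|) \leq K$ in that case.

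The main obstacle---really the conceptual point rather than a technical one---is to see why ``bounded above by a computable function'' is the right hypothesis and cannot be weakened to mere finiteness of the function's values. What the search needs is not that the minimal derivation be short, but that we possess a \emph{computable} ceiling $K$ telling us how far to search before safely concluding that no derivation exists; this is exactly what a computable upper bound furnishes, turning an otherwise unbounded (and only semi-decidable) search over the enumerable set of derivations into a terminating one. I would also flag the division of hypotheses between the two parts: bounding work controls the length of the words that can appear but forces a search over all words of bounded length, effective only when $X$ is finite and $R$ is decidable; bounding the number of steps instead lets one apply the relations directly but requires enumerating all of $R$, hence the finiteness assumption in part 2.
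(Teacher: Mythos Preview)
Your proposal is correct and matches the paper's proof essentially line for line: the same cycle of implications, Lemma \ref{istar} for the forward direction, and Proposition \ref{Derv} to make the bounded search effective in the remaining direction. The additional commentary you give on why the hypotheses split the way they do between parts 1 and 2 is sound and adds helpful context, but the mathematical content is identical.
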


\begin{proof}  We give the proof for 1, that for 2 being analogous.

By Lemma \ref{istar}, if the Word Problem is solvable for $\mathcal{P}$ then $\Omega$ is computable which implies trivially that $\Omega$ is bounded above by a computable function. We have to prove that if $\Omega$ is bounded above by a computable function then the Word Problem is solvable.

Let $f:\mathbb{N}^2 \rightarrow \mathbb{N}$ be a computable function such that $\Omega(n_1, n_2)\leqslant f(n_1, n_2)$ and let $u_1,u_2$ be words such that $|u_1|\leqslant n_1$ and $|u_2|\leqslant n_2$. We have that $(u_1,u_2)$ is a relation if and only if there exists a derivation from $u_1$ to $u_2$ of work less or equal to $f(n_1, n_2)$. By 1 of Proposition \ref{Derv} these derivations can be computed in a finite time, thus one can decide whether $(u_1,u_2)$ is a relation. This solves the Word Problem.  
     \end{proof}

Part 2 of Proposition \ref{gersho} was first proved (in a less general form) by Madlener and Otto in (\cite{MO}, Lem. 3.2).

\section{The Word Problem for group presentations}

We recall that $\rho$ denotes the reduced form (Definition \ref{rho}).

\begin{definition} \label{defarea} \rm Let $<  X \, | \, R  >$ be a group presentation, let $\mathcal{N}$ be the normal subgroup of $\mathcal{F}(X)$ normally generated by $R$ and let $w \in \mathcal{N}$. The \textit{area of $w$} is defined as
       $$\textsf{area}(w)=\textsf{min}\{k\in \mathbb{N} : \rho(a_1 r_1  a_1^{-1} \cdots a_k  r_k  a_k^{-1})=w, \, \, a_i \in \mathcal{F}(X), \, \, r_i \in R\}.$$
       The \textit{group work of $w$} is defined as
    $$\textsf{gwork}(w)=\textsf{min}\{|r_1|+\cdots+|r_n|  :  \rho(a_1 r_1 a_1^{-1} \cdots a_k r_k a_k^{-1})=w, \, \, a_i \in \mathcal{F}(X), \, \, r_i \in R\}.$$
 \end{definition}

The area of a relator was introduced by M. Gromov in (\cite{Grom}, Sec. 2.3).

Given a relator $w$, its area is equal to the minimal number of faces of van Kampen diagrams for $w$. Given a van Kampen diagram, we define its \textit{work} as the number of edges belonging to a single face plus twice the number of edges belonging to two faces. Then the group work of $w$ is the minimal work of a van Kampen diagram for $w$.

 \begin{remark} \label{smer} \rm Let $\mathcal{P}:=\langle \, X \, | \, R \, \rangle$ be a group presentation, let $w$ be a relator of $\mathcal{P}$ and let $r_1, \cdots, r_k$ be defining relators and $a_1, \cdots, a_k$ be words such that $w$ is the reduced form of 
               $$a_1 r_1  a_1^{-1} \cdots a_k r_k u_k^{-1}.$$
Let $c$ be the maximal length of $r_1, \cdots, r_k$ and let $|w|=n$. By using the properties of the associated van Kampen diagram one can prove that there exist words $b_1, \cdots, b_k$ such that $|b_i|\leqslant ck+n$ and $w$ is the reduced form of 
              $$b_1 r_1  b_1^{-1} \cdots b_k r_k b_k^{-1},$$
that is we can choose the conjugating elements of length no more than $ck+n$. See Prop. 2.2 of \cite{Gerst} or The. 1.1 and 2.2 of \cite{Short} for a proof of this fact\footnote{In (\cite{LS}, Rem. after Lem. V.1.2) it is given $kn$ as bound for the length of the conjugating elements, but $kn$ is worse than $ck+n$ since $c$ is a constant.} (see also Lem. 7.1 of \cite{Miller}).
    \end{remark}

\begin{definition} \label{Dehn} \rm Let $\mathcal{P}$ be a finitely generated group presentation and let $n$ be a natural number; the \textit{Dehn function} of $\mathcal{P}$ at $n$ is
    $$\Delta(n):=\textsf{max}\{\textsf{area}(w) : w \, \,\textrm{is a relator and} \, |w|\leqslant n\}.$$
    The \textit{group work} of $\mathcal{P}$ at $n$ is
 $$\Omega_g(n)=\textsf{max}\{\textsf{gwork}(w)\, : \, w \, \,\textrm{is a relator and} \, \, |w|\leqslant n\}.$$
    \end{definition}

 In \cite{GI}, R. I. Grigorchuk and S. V. Ivanov have defined a function $f_1$ from $\mathbb{N}$ to $\mathbb{N}$ in the following way: if $n$ is a natural number then $f_1(n)$ is the minimal number of edges of van Kampen diagrams for relators of $\mathcal{P}$ of length less or equal to $n$.

     \begin{exercise} \label{}  Prove that $f_1$ is equivalent (Definition \ref{equiv}) to $\Omega_g$.      \end{exercise}

       \begin{remark} \label{smile2} \rm In the same way as shown in Remark \ref{smile} we have that if $h$ is the minimal length of elements of $R$ then for every relator $w$ we have that $h \, \textsf{area}(w)\leqslant \textsf{gwork}(w)$, thus 
                $$h \Delta \leqslant \Omega_g;$$
and if the length of the elements of $R$ is bounded above by a constant $h'$ (in particular if $R$ is finite) then $\textsf{area}(w) \leqslant h' \textsf{gwork}(w)$ and $\Omega_g \leqslant h' \Delta$, thus 
       $$h \Delta \leqslant \Omega_g \leqslant h' \Delta.$$ 
 Therefore in this case $\Delta$ and $\Omega_g$ are strongly equivalent (Definition \ref{equiv}).      
       \end{remark}

\begin{exercise} \label{mov2} Let $\mathcal{FA}(X)$ be the free abelian group on $X$ and consider the presentation given in Definition \ref{comm}. Prove that: \begin{enumerate}

\item $\displaystyle \Delta(2n+1)=\Delta(2n)=\frac{(n-1)n}{2}$ \, if $|X| \geqslant n$ (in particular if $X$ is 

infinite);

\item $\displaystyle \Delta(2n+1)=\Delta(2n)=h(n-h) + \frac{(h-1)h}{2}$ \, if $|X|=h+1<n$.
       \end{enumerate}  \end{exercise}

By Remark \ref{smile2} we have that $\Omega_g = 4 \Delta$ for the presentation of Exercise \ref{mov2}.

\begin{remark} \label{sabb} \rm Let $\mathcal{P}=<  X \, | \, R  >$ be a group presentation and let us consider the monoid presentation corresponding to $\mathcal{P}$ (Definition \ref{segr}). Let $u$ and $v$ be non-necessarily reduced words on $X\cup X^{-1}$ such that $(u, v)$ is a 1-step derivation. Thus either $v$ is obtained from $u$ by means of a cancellation or by the application of a relation $(r, 1)$ where $r \in R$. In the first case $\rho(u)=\rho(v)$, in the second there exists a word $a$ such that $\rho(v)=\rho(ara^{-1} \,u)$, thus $\rho(u)=\rho(ar^{-1}a^{-1} \, v)$. This implies that if $v$ is obtained from $u$ by a derivation of length $k$, then there exists $h \in \{0, 1, \cdots, k\}$ and there exist words $a_1, \cdots, a_h$ and $r_1, \cdots, r_h \in R$ such that $\rho(v)=\rho(a_1 r_1 a_1^{-1} \cdots a_h r_h a_h^{-1} \, u)$.
 
 Let $u=1$, that is $v$ is a relator; if $k$ is the derivational length of $(1, v)$, then there exists an expression of $v$ as product of $h\leqslant k$ conjugates of defining relators, that is the area of $v$ is less or equal to the derivational length from 1 to $v$. In formula, $\textsf{area}(v)\leqslant \textsf{dl}(v,1)$ and thus $\Delta(n) \leqslant \textsf{DL}(n,0)$ for every natural $n$.
 
 In the same way one proves that $\textsf{gwork}(v)\leqslant \textsf{work}(v,1)$ and $\Omega_g(n) \leqslant \Omega(n,0)$.    \end{remark}

   \begin{proposition} \label{opti} Let $\mathcal{P}:=\langle \, X \, | \, R \, \rangle$ be a group presentation, let $\Delta$ and $\Omega_g$ be the Dehn function and the group work of $\mathcal{P}$ and let $\textsf{DL}$ and $\Omega$ be the be the derivational length and work of the monoid presentation corresponding to $\mathcal{P}$. Let $n$ be a natural number; we have that \begin{enumerate}
    
  \item $\Omega_g(n) \leqslant \Omega(n,0) \leqslant 4\Omega_g^3(n) + \Omega_g^2(n) + (4n+1)\Omega_g(n) - n;$
  
  \item let the length of the elements of $R$ be bounded above by a natural $c$ (in particular let $\mathcal{P}$ be finitely related); then
  $$\Delta(n) \leqslant \textsf{DL}(n,0) \leqslant 2c \Delta^2(n) + \left(2n+\frac{c}{2}+1\right)\Delta(n) - \frac{n}{2}$$ 
          and  
  $$\Omega_g(n) \leqslant \Omega(n,0) \leqslant 4c \,\Omega_g^2(n) + (4n+c+1)\Omega_g(n) - n.$$
\end{enumerate} 
        \end{proposition}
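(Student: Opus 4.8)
The plan is to read the three left-hand (lower) bounds straight off Remark~\ref{sabb}, where it is shown that $\textsf{area}(v)\leqslant\textsf{dl}(v,1)$ and $\textsf{gwork}(v)\leqslant\textsf{work}(v,1)$ for every relator $v$, so that $\Delta(n)\leqslant\textsf{DL}(n,0)$ and $\Omega_g(n)\leqslant\Omega(n,0)$. Thus only the upper bounds are at stake, and I would obtain all of them from one construction: turning a van Kampen expression for a relator into an explicit monoid derivation ending at $1$, then reading off its number of steps (for $\textsf{DL}$) and its work (for $\Omega$).

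Fix a word $w$ on $X\cup X^{-1}$ with $|w|\leqslant n$ representing $1$ in the group, and let $\overline{w}:=\rho(w)$ be its reduced form, a relator with $|\overline{w}|\leqslant|w|$. By Definition~\ref{defarea} write $\overline{w}=\rho(a_1 r_1 a_1^{-1}\cdots a_k r_k a_k^{-1})$ with the $r_i\in R$, choosing the expression so that $k=\textsf{area}(\overline{w})$ when bounding $\textsf{DL}$, and so that $\sum_i|r_i|=\textsf{gwork}(\overline{w})$ when bounding $\Omega$. Set $\gamma:=\max_i|r_i|$; Remark~\ref{smer} lets me take the conjugators with $|a_i|\leqslant\gamma k+|\overline{w}|$. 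Writing $W:=a_1 r_1 a_1^{-1}\cdots a_k r_k a_k^{-1}$ as an unreduced word, I build a derivation from $w$ to $1$ in the monoid presentation corresponding to $\mathcal P$ (Definition~\ref{segr}) in four phases: (A) freely reduce $w$ to $\overline{w}$ with the relations $\texttt{F}$, in $\frac{1}{2}(|w|-|\overline{w}|)$ steps; (B1) since $\rho(W)=\overline{w}$, transform $\overline{w}$ into $W$ by inserting cancelling pairs (the symmetrized relations $\texttt{F}$), in $\frac{1}{2}(|W|-|\overline{w}|)$ steps; (B2) delete each subword $r_i$ by the defining relation $(r_i,1)$, which lies in the relation set $R_1$ because $r_i=r_i\cdot 1$ with $r_i\in R$, in $k$ steps, leaving $a_1 a_1^{-1}\cdots a_k a_k^{-1}$; (B3) freely reduce this to $1$, in $\sum_i|a_i|$ steps.

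Summing the step counts, and using $|W|=2\sum_i|a_i|+\sum_i|r_i|$, gives
$$\textsf{dl}(w,1)\leqslant\frac{1}{2}|w|+2\sum_i|a_i|+\frac{1}{2}\sum_i|r_i|+k-|\overline{w}|.$$
For case~2 one has $|r_i|\leqslant c$, hence $\gamma\leqslant c$, $\sum_i|r_i|\leqslant ck$ and $\sum_i|a_i|\leqslant ck^2+k|\overline{w}|$; inserting these, bounding $|w|\leqslant n$, grouping the two $|\overline{w}|$-terms as $(2k-1)|\overline{w}|\leqslant(2k-1)n$ (legitimate when $k\geqslant 1$), and finally using $k\leqslant\Delta(n)$, collapses the right-hand side to exactly $2c\Delta^2(n)+\left(2n+\frac{c}{2}+1\right)\Delta(n)-\frac{n}{2}$. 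For the work bounds I instead add the work of the four phases — each free step has work $2$ and each relation $(r_i,1)$ has work $|r_i|$ — which yields
$$\textsf{work}(w,1)\leqslant|w|-2|\overline{w}|+4\sum_i|a_i|+2\sum_i|r_i|;$$
substituting $\sum_i|r_i|=\textsf{gwork}(\overline{w})\leqslant\Omega_g(n)$ and $k\leqslant\sum_i|r_i|\leqslant\Omega_g(n)$, together with $\gamma\leqslant\Omega_g(n)$ in case~1 and $\gamma\leqslant c$ in case~2, produces the stated cubic bound on $\Omega(n,0)$ in case~1 and the quadratic one in case~2 (the estimates the count gives are if anything slightly smaller than the stated polynomials, so the displayed inequalities follow a fortiori).

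The step I expect to be the main obstacle is the control of $\sum_i|a_i|$: an arbitrary expression of a relator as a product of conjugates of defining relators puts no ceiling on the lengths of the conjugators, and it is exactly the van Kampen estimate $|a_i|\leqslant\gamma k+|\overline{w}|$ of Remark~\ref{smer} that makes phase (B3) finite and keeps the whole count polynomial. The only remaining delicacy is arithmetic bookkeeping, in particular retaining the term $-|\overline{w}|$ and pairing it with $2k|\overline{w}|$ to reach the precise constant $-\frac{n}{2}$; in the degenerate case $\overline{w}=1$ (so $k=0$) the derivation is pure free reduction and the bound is verified directly.
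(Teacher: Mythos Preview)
Your proof is correct and follows essentially the same approach as the paper: both arguments invoke Remark~\ref{sabb} for the lower bounds, then for the upper bounds use Remark~\ref{smer} to control the conjugator lengths and build an explicit monoid derivation through the unreduced product $a_1 r_1 a_1^{-1}\cdots a_k r_k a_k^{-1}$, counting steps and work. The only cosmetic differences are that the paper runs the derivation from $1$ to $w$ rather than $w$ to $1$, and treats $w$ as already reduced (your phase~(A) handles the non-reduced case explicitly, and your resulting constants are in fact marginally sharper, so the stated inequalities follow a fortiori as you note).
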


     \begin{proof} The inequalities $\Delta(n) \leqslant \textsf{DL}(n,0)$ and $\Omega_g(n) \leqslant \Omega(n,0)$ have been proved in Remark \ref{sabb}. First we prove 1 and the second inequality of 2. 
 
 Let $w$ be a relator of length $n$ and set $h=\textsf{gwork}(w)$. Let  
             $$w=\rho(a_1 r_1  a_1^{-1} \cdots a_k r_k a_k^{-1})$$
 with $|r_1|+ \cdots+ |r_k|=h$. Let $p=\textsf{max}\{|r_1|, \cdots, |r_k|\}$; by Remark \ref{smer} there exist words $b_1, \cdots, b_k$ such that $|b_i|\leqslant p k+n$ and $w$ is the reduced form of 
              $$u:=b_1 r_1  b_1^{-1} \cdots b_k r_k b_k^{-1}.$$
We show that in the monoid presentation corresponding to $\mathcal{P}$ there exists a derivation from 1 to $w$ of work less or equal to $4h^3+ h^2  + (4n+1)h - n$ for the inequality of 1 and of work less or equal to $4ch^2+ (4n+c+1)h - n$ for the second inequality of 2.

Since $|b_i|\leqslant pk+n$ for $i=1, \cdots, k$, then there is a derivation from 1 to $b_1  b_1^{-1} \cdots b_k b_k^{-1}$ of work less or equal to $2(pk+n)k$, thus there is a derivation from 1 to $u$ of work less or equal $h+2(pk+n)k=h+2pk^2+2nk$.

Since $|r_i| \leqslant p$, then $|b_i r_i b_i^{-1}| \leqslant p+2(pk+n)$ and thus 
            $$|u|\leqslant k[2(pk+n)+p]=2pk^2 + (2n+p)k.$$

We have that $w$ is the reduced form of $u$, thus there is a derivation from $u$ to $w$ consisting only in cancellations, that is of work 
     $$|u|-|w| \leqslant 2pk^2 + (2n+p)k - n.$$

In conclusion there is a derivation from 1 to $w$ of work less or equal to 
    \begin{equation} \label{r&f} h+2pk^2+2nk+  2pk^2 + (2n+p)k - n=h+4pk^2  + (4n+p)k - n \leqslant  \end{equation} 
    $$\leqslant  4ph^2  + (4n+p+1)h - n$$
 where the last inequality follows from the fact that $k\leqslant h$.    
 
Since $p\leqslant h$ then we have that (\ref{r&f}) is less or equal to $4h^3 + h^2 + (4n+1)h - n$ and this proves 1.

Let the length of the elements of $R$ be bounded above by a natural $c$; then this implies that $p \leqslant c$ and then that (\ref{r&f}) is less or equal to $4ch^2  + (4n+c+1)h - n$ and this proves the second inequality of 2.

For the first inequality of 2 the procedure is analogous with the following modifications. We take $k$ as $\textsf{area}(w)$; the derivation from 1 to $u$ has length less or equal $pk^2+(n+1)k$; there is a derivation from $u$ to $w$ consisting only in cancellations and of length less or equal to 
       $$\frac{|u|-|w|}{2} \leqslant pk^2 + \left(n+\frac{p}{2}\right)k - \frac{n}{2}.$$  \end{proof}

 \begin{problem} \label{} Find better bounds than those given in Proposition \ref{opti} for $\textsf{DL}(n,0)$ in function of $\Delta(n)$ and for $\Omega(n,0)$ in function of $\Omega_g(n)$; or prove that the latter are optimal by finding group presentations in which these bounds are attained. 
 \end{problem}

We have the following
        
\begin{proposition} \label{} Let $\mathcal{P}:=< X\, | \, R >$ be a group presentation and let $n$ and $k$ be natural numbers. \begin{enumerate}

\item If $\mathcal{P}$ is finitely generated decidable then the set of relators of length and group work equal to $n$ and $k$ respectively is finite and effectively computable;

  \item if $\mathcal{P}$ is finite then the set of relators of length and area equal to $n$ and $k$ respectively is finite and effectively computable.
  
     \end{enumerate}
         \end{proposition}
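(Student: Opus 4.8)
The plan is to reduce both statements to a finite search over words of bounded length, exploiting the finiteness of $X$ together with the bound on the lengths of the conjugating elements furnished by Remark \ref{smer}. In both cases finiteness of the output set is automatic: since $X$ is finite there are at most $(2|X|)^n$ reduced words of length $n$ over $X \cup X^{-1}$, and the relators of length $n$ form a subset of these, so any subset of them singled out by a condition on the area or on the group work is finite. Thus the only real content is effective computability, for which I would exhibit an explicit finite algorithm.

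For part 2, I would set $c = \textsf{max}\{|r| : r \in R\}$, which exists since $R$ is finite, and enumerate all products $P = b_1 r_1 b_1^{-1} \cdots b_j r_j b_j^{-1}$ with $0 \leq j \leq k$, each $r_i \in R$, and each $b_i$ a reduced word satisfying $|b_i| \leq ck + n$. As $X$ and $R$ are finite and the length bound is explicit, there are only finitely many such $P$, and for each I compute $\rho(P)$ and retain those of length $n$. Every word so retained is a relator of length $n$ and area $\leq j \leq k$. Conversely, if $w$ is a relator with $|w| = n$ and $\textsf{area}(w) = m \leq k$, then $w = \rho(a_1 r_1 a_1^{-1} \cdots a_m r_m a_m^{-1})$ with $r_i \in R$, and by Remark \ref{smer} the conjugators may be chosen with $|b_i| \leq cm + n \leq ck + n$; hence this $m$-factor expression occurs in the enumeration. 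It follows that the least number $j$ of factors among all enumerated expressions producing a given length-$n$ word $w$ equals $\textsf{area}(w)$, so the relators of length $n$ and area exactly $k$ are obtained by selecting those length-$n$ words whose minimal observed $j$ equals $k$.

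For part 1 the alphabet $X$ is again finite but $R$ may be infinite, so I would first isolate the defining relators that can matter. Since $\textsf{gwork}(w) = k$ forces $|r_1| + \cdots + |r_j| = k$ in a minimal expression, every defining relator occurring there has length at most $k$, and after discarding trivial factors the number of factors is at most $k$. Because $X$ is finite and $R$ is decidable, the set $R_{\leq k} = \{r \in R : |r| \leq k\}$ is finite and effectively computable: I list the (finitely many) reduced words of length $\leq k$ over $X \cup X^{-1}$ and test each for membership in $R$ using the decision procedure for $R$. I then enumerate all products $P = b_1 r_1 b_1^{-1} \cdots b_j r_j b_j^{-1}$ with $0 \leq j \leq k$, $r_i \in R_{\leq k}$, $|r_1| + \cdots + |r_j| \leq k$, and $|b_i| \leq k^2 + n$; this last bound is legitimate because in a minimal expression both the maximal factor length and the number of factors are at most $k$, so Remark \ref{smer} gives $|b_i| \leq (\textsf{max}_i |r_i|) \cdot j + n \leq k^2 + n$. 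Exactly as in part 2, retaining the length-$n$ reduced forms and reading off, for each, the least value of $\sum_i |r_i|$ attained recovers $\textsf{gwork}$, so the relators of length $n$ and group work $k$ are identified by a finite computation.

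The step I expect to be the main obstacle, and the one on which everything hinges, is the a priori bound on the lengths of the conjugating words, namely Remark \ref{smer}: without it one would be forced to search over conjugators $a_i$ of unbounded length and the procedure would fail to terminate. Everything else -- finiteness of the alphabet, decidability of $R$ in part 1, finiteness of $R$ in part 2, and the effectiveness of free reduction and of comparison of reduced words -- is routine, serving only to guarantee that the bounded search space is genuinely finite and that the membership and equality tests performed within it are decidable.
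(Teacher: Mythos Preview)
Your proposal is correct and follows essentially the same approach as the paper: both arguments hinge on Remark~\ref{smer} to bound the conjugating elements (by $ck+n$ in the finite case and by $k^2+n$ in the decidable case), then perform a finite enumeration of products of conjugates of defining relators and compute reduced forms. Your write-up is in fact more careful than the paper's, since you explicitly address how to recover the \emph{exact} value of the area or group work (by taking the minimum over all enumerated expressions yielding a given word), whereas the paper leaves this step implicit.
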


  \begin{proof} \begin{enumerate}
  
  \item Let $w$ be a relator of length and group work equal to $n$ and $k$ respectively; then $w$ is the reduced form of a product of at most $k$ conjugates of defining relators the sum of whose lengths is equal to $k$. Since $\mathcal{P}$ is finitely generated then the words of length up to $k$ are finitely many; since $\mathcal{P}$ is decidably related then one can decide which of these words are defining relators. In particular since none of these has length more than $k$, then by Remark \ref{smer} we have that the conjugating elements can be chosen of length no more than $k^2+n$, that is we can find all the relators of length and work equal to $n$ and $k$ in a finite time.
  
  \item Let $c$ be the maximal length of defining relators; then a relator of area and length equal to $k$ and $n$ respectively is the reduced form of a product of $k$ conjugates of defining relators with the length of any conjugating element bounded above by $ck+n$ by Remark \ref{smer}. Thus all the relators of length and area equal to $n$ and $k$ can be found in a finite time.
\end{enumerate}
     \end{proof}

 The proofs of the next two propositions are analogous to those of Lemma \ref{istar} and Proposition \ref{gersho}.

\begin{lemma} \label{} Let $\mathcal{P}$ be an enumerably related group presentation with solvable Word Problem and let $\Delta$ and $\Omega_g$ be its Dehn function and group work. If $\mathcal{P}$ is finitely generated then $\Delta$ and $\Omega_g$ are bounded above by computable functions; if $\mathcal{P}$ is finitely generated decidable then $\Omega_g$ is computable; if $\mathcal{P}$ is finite then $\Delta$ is computable.
\end{lemma}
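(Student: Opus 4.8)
The plan is to mirror the proof of Lemma \ref{istar}, translating every statement about derivations into the corresponding statement about expressions of a relator as a product of conjugates of defining relators. The dictionary is: a one-step derivation is replaced by a single conjugate factor $a_i r_i a_i^{-1}$, the derivational length $\textsf{dl}$ by the area $\textsf{area}$, the work $\textsf{work}$ by the group work $\textsf{gwork}$, and consequently $\textsf{DL}$ by $\Delta$ and $\Omega$ by $\Omega_g$; the role of Proposition \ref{Derv} is played by the proposition immediately preceding this lemma. First I would record the group analogue of Remark \ref{spesp}: since $X$ and $R$ are enumerable, the set of products $a_1 r_1 a_1^{-1} \cdots a_k r_k a_k^{-1}$ with $a_i \in \mathcal{F}(X)$ and $r_i \in R$ is enumerable by Remark \ref{yroth}, and the reduced form $\rho$ of each such product is computable in the free group. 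Hence, running this enumeration and comparing reduced words, for any relator $w$ one produces after a finite time an expression of $w$ as a product of conjugates of defining relators. This is the ``search'' step that replaces the Word Search Problem.

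For the first assertion, let $\mathcal{P}$ be finitely generated, so $X \cup X^{-1}$ is finite and the words of length at most $n$ are finitely many. For each such word $w$ I would use solvability of the Word Problem to decide whether $w$ is a relator; for those that are, the search above terminates and yields an upper bound $f_w$ for $\textsf{area}(w)$ and an upper bound $g_w$ for $\textsf{gwork}(w)$, since an expression with $k$ factors and relator-length sum $s$ gives $\textsf{area}(w) \leqslant k$ and $\textsf{gwork}(w) \leqslant s$. Taking the maxima of the $f_w$ and of the $g_w$ over the finitely many relators $w$ with $|w| \leqslant n$ produces, in finite time, computable upper bounds for $\Delta(n)$ and $\Omega_g(n)$.

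For the second assertion let $\mathcal{P}$ be finitely generated decidable and fix a relator $w$ with $|w| \leqslant n$; the bound $g_w \geqslant \textsf{gwork}(w)$ is already available from the first step. By part 1 of the preceding proposition, for each $k = 1, \ldots, g_w$ the set of relators of length $|w|$ and group work $k$ is finite and effectively computable, so I can test whether $w$ lies in it; the least $k$ for which it does equals $\textsf{gwork}(w)$, whence $\Omega_g$ is computable. The third assertion is obtained by the identical argument with $\textsf{gwork}$, $\Omega_g$ and group work replaced throughout by $\textsf{area}$, $\Delta$ and area, now invoking part 2 of the preceding proposition, which requires $\mathcal{P}$ finite.

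The only genuinely non-routine point, and the one I would be most careful about, is the search step: I must ensure that the enumeration of conjugate-products together with the \emph{decidability of equality of reduced words in the free group} really terminates for every relator, and that it never has to decide membership in $\mathcal{N}$ directly. The Word Problem is invoked only beforehand, to select which finitely many words of length $\leqslant n$ to search for, so that the search is never launched on a non-relator, on which it would run forever. Keeping this separation clear is what makes all three claims go through.
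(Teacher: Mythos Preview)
Your proposal is correct and follows exactly the route the paper intends: the paper simply states that the proof is analogous to that of Lemma~\ref{istar}, and your dictionary (area/gwork for dl/work, the preceding proposition for Proposition~\ref{Derv}, and an enumeration of conjugate-products for the Word Search step) is precisely the translation required. Your explicit care in using the Word Problem first to filter out non-relators before launching the search is a point the paper leaves implicit, but it is the right way to make the analogy rigorous.
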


\begin{proposition} \label{gersho2} \begin{enumerate}
  
  \item A finitely generated decidable group presentation has a solvable Word Problem if and only if $\Omega_g$ is computable, if and only if $\Omega_g$ is bounded above by a computable function; 
  
    \item a finite group presentation has a solvable Word Problem if and only if $\Delta$ is computable, if and only if $\Delta$ is bounded above by a computable function.\end{enumerate}
          \end{proposition}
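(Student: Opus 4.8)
The plan is to mirror the proofs of Lemma~\ref{istar} and Proposition~\ref{gersho}, replacing the derivational work $\Omega$ and the derivational length $\textsf{DL}$ by the group work $\Omega_g$ and the Dehn function $\Delta$, and replacing the role of Proposition~\ref{Derv} by the Proposition proved just above (effective computability of the sets of relators of prescribed length and group work, resp. area). Recall that for a group presentation the Word Problem is solvable if and only if the set of relators is decidable, so everything reduces to deciding, for a reduced word $w$, whether $w$ lies in the normal subgroup $\mathcal{N}$ of Definition~\ref{defarea}.

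For the two \emph{only if} implications I would simply invoke the Lemma stated immediately above: if $\mathcal{P}$ is enumerably related with solvable Word Problem and finitely generated decidable, then $\Omega_g$ is computable, and if $\mathcal{P}$ is finite then $\Delta$ is computable; in either case the function is trivially bounded above by a computable function, namely itself. This settles the first two links of each chain.

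The content lies in the remaining implication, that a computable upper bound forces solvability. I treat case~1; case~2 is identical with $\Omega_g$ replaced by $\Delta$ and group work by area. Suppose $f:\mathbb{N}\to\mathbb{N}$ is computable with $\Omega_g(n)\leqslant f(n)$, and let $w$ be a reduced word with $|w|=n$. If $w$ is a relator then $\textsf{gwork}(w)\leqslant \Omega_g(n)\leqslant f(n)$. Hence $w$ is a relator if and only if $w$ occurs among the relators of length $n$ and group work $k$ for some $k\in\{0,1,\dots,f(n)\}$. By part~1 of the Proposition proved just above, each of these finitely many sets is finite and effectively computable, so one may list them all and test whether $w$ belongs to one of them. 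If it does, $w$ is a relator; if after exhausting all $k\leqslant f(n)$ it has not appeared, then $w$ is not a relator, since otherwise its group work would be at most $f(n)$. This decides membership of $w$ in $\mathcal{N}$ and hence solves the Word Problem.

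The only place where any care is required---and it is the same point that is the crux of the preceding Proposition---is the guarantee that the sets of relators of fixed length and group work (resp. area) are genuinely finite and computable. This rests on the bound of Remark~\ref{smer}, which confines the conjugating elements $b_i$ to length at most $k^2+n$ (resp. $ck+n$ in the finite case) and thereby turns an a priori unbounded search for an expression $w=\rho(b_1 r_1 b_1^{-1}\cdots b_k r_k b_k^{-1})$ into a finite one. Since that work has already been carried out, the proof here is purely a matter of assembling the pieces; the step to state explicitly is the correct rejection of non-relators, which is legitimate precisely because the computable bound $f$ caps the range of $k$ that must be searched.
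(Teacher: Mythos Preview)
Your proposal is correct and follows exactly the approach indicated in the paper, which simply states that the proofs are analogous to those of Lemma~\ref{istar} and Proposition~\ref{gersho}. You have spelled out that analogy precisely, replacing Proposition~\ref{Derv} by the preceding Proposition on relators of bounded group work (resp.\ area) and invoking the preceding Lemma for the forward direction.
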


 Part 2 of Proposition \ref{gersho2} is no longer true if the number of relators is infinite even when it is decidable: see Example 2.4 of \cite{GI}, where it is shown a finitely generated decidable presentation with unsolvable Word Problem and Dehn function constantly equal to 2.

 \section{The Conjugacy Problem}

\begin{definition} \label{} \rm Let $M$ be a monoid and let $a, b\in M$; we say that $a$ is \textit{conjugated to $b$} if there exists $t\in M$ such that $t a=b t$.     \end{definition}

The conjugacy is a reflexive and transitive relation; it is also symmetric if $M$ is a group.

\begin{definition} \label{} \rm Let $\mathcal{P}=<  X \, | \, R  >$ be a presentation for a monoid $M$. We say that $\mathcal{P}$ \textit{has a solvable Conjugacy Problem} if given $a, b\in \mathcal{M}(X)$ there exists an algorithm which determines whether the element of $M$ determined by $a$ is conjugated to that determined by $b$, that is whether or not there exists a word $t\in \mathcal{M}(X)$ such that $(t a, b t)$ is a relation for $\mathcal{P}$. This is equivalent to say that the set
      \begin{equation} \label{sargi}  \{(a, b)\in \mathcal{M}(X)^2 \,: \,\, t a= b t\,\, \mathrm{in} \,\, M \,\, \textrm{for some} \,\,  t \in \mathcal{M}(X)\} \end{equation}
is decidable. 

We say that $\mathcal{P}$ \textit{has a solvable Conjugacy Search Problem} if it has a solvable Conjugacy Problem and if there exists an algorithm which finds a conjugating word $t$.      \end{definition}

\begin{remark} \label{} \rm Let $\mathcal{P}=<  X \, | \, R  >$ be a presentation for a monoid $M$ and let the Conjugacy Problem be solvable. If $M$ is a group then also the Word Problem is solvable since a word on $X$ is equal to 1 in $M$ if and only if it is conjugated to 1. If $M$ is not a group then the Word Problem is not necessarily solvable since there can be an algorithm which, given two words $a$ and $b$, decides whether $a$ is conjugated to $b$ but not if $a=b$ in $M$.  \end{remark}

\begin{problem} \label{prob1} Find a presentation with solvable Conjugacy Problem but unsolvable Word Problem or prove that the solvability of the Conjugacy Problem implies that of the Word Problem. \end{problem}

\begin{remark} \label{} \rm Let $\langle R \rangle$ be the set of relations of $\mathcal{P}$. If $\mathcal{M}(X)$ and $\langle R \rangle$ are enumerable (in particular if $X$ and $R$ are enumerable, see Corollary \ref{avig}), then the set (\ref{sargi}) is enumerable. This is because in this case the set $\mathcal{M}(X)^3 \times \langle R \rangle$ is enumerable and thus is equal to a set of the form 
     $$\{\alpha_n=(a_n, b_n, t_n, u_n, v_n) : n\in \mathbb{N}, \, a_n, b_n, t_n \in \mathcal{M}(X), \, (u_n, v_n) \in \langle R \rangle\},$$
      where there is an algorithm whose $n$-th output is $\alpha_n$. Consider the algorithm whose $n$-th output is $(a_n, b_n)$ if $t_n a_n = u_n$ and $b_n t_n = v_n$, otherwise the output is empty; this algorithm enumerates (\ref{sargi}).   \end{remark}

\begin{remark} \label{CSP} \rm Let $\mathcal{P}=<  X \, | \, R  >$ be a presentation such that $\mathcal{M}(X)$ and $\langle R\rangle$ are enumerable. Then the solvability of the Conjugacy Problem is equivalent to that of the Conjugacy Search Problem. Indeed   
  $$\mathcal{M}(X) \times \langle R\rangle=\{\beta_n=(t_n, u_n, v_n) : n\in \mathbb{N}, \, t_n \in \mathcal{M}(X), (u_n, v_n)\in \langle R\rangle\},$$
where there is an algorithm whose $n$-th output is $\beta_n$. Let $a$ be conjugated to $b$; let us consider the algorithm whose $n$-th output is $t_n$ if $u_n=t_n a$ and $v_n=b t_n$, otherwise the output is empty. Then for some $n$ the output will be non-empty and $t_n$ will be a conjugating word.    \end{remark}

We have seen in Remark \ref{spesp} that the complexity of the Word Search Problem can be much greater than that of the Word Problem. It would be interesting to give an answer to the analogous problem for the Conjugacy Problem.

\begin{problem} Find a presentation whose Conjugacy Search Problem has a complexity greater than that of the Conjugacy Problem; or prove that the two complexities are equal for any presentation. \end{problem}

\begin{definition} \label{gamma} \rm Let $M$ be a monoid, let $X$ be a generating set for $M$ and let $a_1, a_2\in \mathcal{M}(X)$ be such that the element of $M$ determined by $a_1$ is conjugated to that determined by $a_2$. We set
       $$\gamma(a_1, a_2):=\textsf{min}\{|t| \,\, : t \in \mathcal{M}(X) \,\,\mathrm{and} \,\, ta_1=a_2 t \,\, \textrm{in} \,\, M \}.$$
  Let $X$ be finite and let $n_1, n_2$ be natural numbers: we set
       $$\Gamma(n_1, n_2):=\textsf{max}\{ \gamma(a_1, a_2) \,\, : a_1 \,\, \mathrm{is \,\, conjugated \,\, to} \,\, a_2 \,\, \mathrm{in} \,\, M$$
        $$\mathrm{and} \,\, |a_1|\leqslant n_1, \,\, |a_2|\leqslant n_2\}.$$    \end{definition}

\medskip

The functions $\gamma$ and $\Gamma$ depend on the generating set $X$. If $n'_1\leqslant n_1$ and $n'_2\leqslant n_2$ then $\Gamma(n'_1, n'_2) \leqslant \Gamma(n_1, n_2)$. We have seen that if $M$ is a group then the conjugacy is a symmetric relation, thus $\gamma$ and $\Gamma$ are symmetric functions. If $M$ is not a group then this is not true anymore, that is there can be two elements $a_1$ and $a_2$ such that $a_1$ is conjugated to $a_2$ but $a_2$ is not conjugated to $a_1$. Or it may happen that $a_1$ and $a_2$ are conjugated one to the other but $\gamma(a_1, a_2)\neq \gamma(a_2, a_1)$. Thus $\Gamma(n_2, n_1)$ is not necessarily equal to $\Gamma(n_1, n_2)$.

 \begin{definition} \label{} \rm  Let $\mathcal{P}=<  X \, | \, R  >$ be a monoid presentation; the function \textit{$\Gamma$ relative to $\mathcal{P}$} is the function $\Gamma$ with respect to $X$ for the monoid presented by $\mathcal{P}$. \end{definition}

 \begin{definition} \label{} \rm  Let $u,v \in\mathcal{M}(X)$; we say that $u$ and $v$ are \textit{cyclic conjugates} one of the other if there exist word $a,b \in\mathcal{M}(X)$ such that $u=ab$ and $v=ba$. \end{definition}

The cyclic conjugation is an equivalence relation.

\begin{exercise} \label{gashi} Prove that: \begin{enumerate}

  \item two elements of $\mathcal{M}(X)$ are conjugate if and only if they are cyclic conjugates;
  
  \item let $a$ and $b$ be words such that $ab\neq ba$; then $\gamma(ab, ba)=|b|$ and $\gamma(ba, ab)=|a|$, thus $\gamma(ab, ba)\neq \gamma(ba, ab)$ if $|a|\neq |b|$;

  \item $\Gamma(m,n)=\Gamma(n,m)=\lfloor \textsf{min}(m,n)/2 \rfloor$ for $\mathcal{M}(X)$.
  
\end{enumerate}
       \end{exercise}

Part 1 of Exercise \ref{gashi} is a solution of the Conjugacy Problem for the free monoid.

 \begin{problem} \label{prob2} \begin{enumerate}
 
  \item Find an example in which $a_1$ is conjugated to $a_2$ but not the contrary.
  
  \item Find a monoid $M$ and two natural numbers $n_1$ and $n_2$ such that $\Gamma(n_2, n_1)\neq \Gamma(n_1, n_2)$.
  \end{enumerate}
  
     Or prove that some or all of the preceding points are impossible. \end{problem}

\smallskip

Let $M$ be a \textit{cancellative monoid}, that is for every $a,b,t \in M$ if $ta=tb$ or $at=bt$ then $a=b$. Any group is a cancellative monoid. Let $M$ be generated by a set $X$, let $a$ be a word on $X$ such that the element of $M$ represented by $a$ is central. If $b$ is another word on $X$ then $a$ is conjugated to $b$ if and only if $a$ and $b$ determine the same element of $M$; in this case $\gamma(a, b)=0$. Thus if $M$ is a commutative cancellative monoid then $\Gamma$ is equal to the zero function. The converse is also true, that is a commutative monoid is cancellative if and only if $\Gamma=0$. If $M$ is a group then it is also true that $M$ is abelian if and only if $\Gamma=0$. These results hold with respect to every generating set of $M$.

If $M$ is a finite monoid then $\Gamma$ is bounded. Indeed if $X$ is a generating set for $M$ and if $h$ is the maximal length of elements of $M$ then $\Gamma$ is eventually equal to a constant less or equal to $h$.

On the other side, let $M$ be generated by a finite set $X$ and let $\Gamma$ be bounded; then this implies that there exists a finite subset $\{t_1, \cdots, t_n\}$ of $M$ such that for every $a,b\in M$ such that $a$ is conjugated to $b$ there exists $i=1, \cdots, n$ such that $t_i a=b t_i$.

\begin{problem} Find a finitely generated infinite non-commutative cancellative monoid whose function $\Gamma$ (with respect to a finite generating set) is bounded; or prove that a commutative monoid for which $\Gamma$ is bounded is finite or cancellative. Solve the same problem for groups instead that for cancellative monoids or prove that a group whose function $\Gamma$ is bounded is finite or abelian. \end{problem}

\begin{definition} \label{} \rm  Let $\mathcal{F}(X)$ be the free group on $X$ and let $u\in \mathcal{F}(X)$, that is $u$ is a reduced word on $X\cup X^{-1}$. We say that $u$ is \textit{cyclically reduced} if the first and the last letters of $u$ are not inverse one of the other. If $u$ is a reduced word which is not cyclically reduced then there exist unique reduced words $a$ and $u'$ such that $u'$ is cyclically reduced and $u=a u' a^{-1}$. The word $u'$ is called the \textit{cyclically reduced form of $u$}. \end{definition}

\begin{definition} \label{} \rm  Let $G$ be a group and let $X$ be a subset of $G$ which generates $G$ as a group. Let $u, v \in \mathcal{F}(X)$; then $u$ and $v$ are conjugated in $G$ if and only if their cyclically reduced forms are conjugated. Let $a_1, a_2$ be cyclically reduced words in $\mathcal{F}(X)$ which are conjugated in $G$; then we set
         $$\gamma_0(a_1, a_2):=\textsf{min}\{|t| \,\, : t \in \mathcal{F}(X) \,\,\mathrm{and} \,\, ta_1=a_2 t \,\, \textrm{in} \,\, G \}.$$
    Let $n_1, n_2$ be natural numbers: we set
      $$\Gamma_0(n_1, n_2):=\textsf{max}\{ \gamma_0(a_1, a_2) \,\, : a_1 \,\, \mathrm{and} \,\, a_2 \,\, \mathrm{are \,\, conjugated \,\, in} \,\, G $$
        $$ \mathrm{and} \,\, |a_1|\leqslant n_1, \,\, |a_2|\leqslant n_2\}.$$ \end{definition}

\smallskip

The Conjugacy Problem for $\mathcal{F}(X)$ is solvable; indeed by The. 1.3 of \cite{MKS} we have that two cyclically reduced words are conjugated in $\mathcal{F}(X)$ if and only if they are cyclic conjugates. 

We recall that the functions $\gamma$ and $\Gamma$ are symmetric for a group.

\begin{exercise} \label{} Prove that if $G=\mathcal{F}(X)$ then $\Gamma_0(m,n)=\lfloor n/2 \rfloor+1$ for $m\geqslant n$.
       \end{exercise}

It would be interesting to compute the function $\Gamma_0$ for group presentations with a solvable Conjugacy Problem. For instance for a finite presentation $\mathcal{P}$ satisfying the small cancellation condition $C'(1/8)$ then $\Gamma_0(n_1, n_2)\leqslant 6 \textsf{max}(n_1, n_2)$. If $\mathcal{P}$ satisfies $C'(1/6)$ then $\Gamma_0(n_1, n_2)\leqslant 12 \textsf{max}(n_1, n_2)$ and if $\mathcal{P}$ satisfies $C'(1/4)$ and $T(4)$ then $\Gamma_0(n_1, n_2)\leqslant 16 \textsf{max}(n_1, n_2)$ (see The. V.5.4 of \cite{LS}). More generally, if $\mathcal{P}$ is a finite presentation of an hyperbolic group, then there exists a constant $k$ such that $\Gamma_0(n_1, n_2)\leqslant k \textsf{max}(n_1, n_2)$ (see III-$\Gamma$-2.11 and 2.12 of \cite{BriHae}).

\smallskip

\begin{proposition} \label{CP} Let $\mathcal{P}=<  X \, | \, R  >$ be a finitely generated presentation with solvable Word Problem and whose set of relations is enumerable (in particular let $R$ be enumerable). $\mathcal{P}$ has a solvable Conjugacy Problem if and only if $\Gamma$ is bounded above by a computable function. In this case $\Gamma$ is computable. \end{proposition}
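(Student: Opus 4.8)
The plan is to establish the cycle of implications (Conjugacy Problem solvable) $\Rightarrow$ ($\Gamma$ computable) $\Rightarrow$ ($\Gamma$ bounded above by a computable function) $\Rightarrow$ (Conjugacy Problem solvable). The middle implication is trivial, since every function is bounded above by itself. The whole argument parallels that of Lemma \ref{istar} and Proposition \ref{gersho}: here the solvable Word Problem plays the role of the decision procedure used to test candidate conjugating words, and $\Gamma$ plays the role that $\Omega$ played there.

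First I would handle the direction from a computable bound to decidability, which is the easier half. Suppose $\Gamma(n_1,n_2)\leqslant f(n_1,n_2)$ for some computable $f$, and let $a_1,a_2\in\mathcal{M}(X)$ with $|a_i|\leqslant n_i$. The key observation is that $a_1$ is conjugated to $a_2$ if and only if there is a conjugating word $t$ with $|t|\leqslant f(n_1,n_2)$: if they are conjugated then by Definition \ref{gamma} the minimal conjugating length $\gamma(a_1,a_2)$ is at most $\Gamma(n_1,n_2)\leqslant f(n_1,n_2)$, while the converse is immediate. Since $X$ is finite there are only finitely many words $t$ with $|t|\leqslant f(n_1,n_2)$, and for each one the condition $t a_1=a_2 t$ in $M$, i.e.\ whether $(t a_1,a_2 t)$ is a relation, is decidable because the Word Problem is solvable. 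Scanning this finite list decides conjugacy, so the Conjugacy Problem is solvable.

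Next I would prove that solvability of the Conjugacy Problem makes $\Gamma$ computable. Because $X$ is finite, $\mathcal{M}(X)$ is enumerable, and $\langle R\rangle$ is enumerable by hypothesis; hence Remark \ref{CSP} applies and the Conjugacy Search Problem is solvable as well. To compute $\Gamma(n_1,n_2)$ I would enumerate the finitely many pairs $(a_1,a_2)$ with $|a_1|\leqslant n_1$, $|a_2|\leqslant n_2$; for each I decide conjugacy, and for each conjugated pair I run the Conjugacy Search Problem to obtain one conjugating word, say of length $\ell$. This yields the a priori bound $\gamma(a_1,a_2)\leqslant\ell$, so I may then test every word $t$ of length $0,1,\dots,\ell$, again finitely many, against the decidable condition $t a_1=a_2 t$; the least length realizing it is exactly $\gamma(a_1,a_2)$. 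Taking the maximum of these finitely many values, which is well defined since $1$ conjugates $1$ to $1$ with $\gamma=0$, produces $\Gamma(n_1,n_2)$ in finite time.

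The step needing the most care, and the real content of the clause ``in this case $\Gamma$ is computable'', is the exact computation of the minimum $\gamma(a_1,a_2)$. Solvability of the Word Problem alone lets me test any single candidate $t$, but without an a priori length bound I could never certify that no shorter conjugating word exists, so a naive search for the minimum would not terminate. It is precisely the Conjugacy Search Problem, available through Remark \ref{CSP}, that furnishes the finite bound $\ell$ closing off the search, and this is where the enumerability hypotheses are essential.
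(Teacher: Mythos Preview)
Your proof is correct and follows essentially the same approach as the paper: both directions use the finiteness of $X$ to reduce to a finite search, the solvable Word Problem to test candidate conjugators, and Remark \ref{CSP} to obtain a priori bounds via the Conjugacy Search Problem. The only cosmetic difference is organizational: the paper deduces computability of $\Gamma$ inside the ``bounded $\Rightarrow$ solvable'' step (searching all $t$ up to the given bound $f(n_1,n_2)$ already yields the exact minimum), whereas you place it in the ``solvable $\Rightarrow$ computable'' step by first extracting a bound $\ell$ from the search procedure and then minimizing below it; the underlying mechanism is identical.
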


\begin{proof} Let $M$ be the monoid presented by $\mathcal{P}$. \begin{enumerate}

  \item Let $\Gamma(n_1, n_2)\leqslant f(n_1, n_2)$ where $f(n_1, n_2)$ is computable. Since $\mathcal{P}$ is finitely generated, then for any naturals $n_1$ and $n_2$ the number of pairs of words $(a_1, a_2)$ such that $|a_1|\leqslant n_1$ and $|a_2|\leqslant n_2$ is finite. Let $a_1, a_2$ be words such that $|a_1|\leqslant n_1$ and $|a_2|\leqslant n_2$; $a_1$ is conjugated to $a_2$ if and only there exists a word $t$ of length no more than $f(n_1, n_2)$ such that $(t a_1, a_2 t)$ is a relation. 

Let us solve the Word Problem for all the pairs of the form $(t a_1, a_2 t)$ where $|t|\leqslant f(n_1, n_2)$; this solves the Conjugacy Problem for $(a_1, a_2)$. Moreover if $a_1$ is conjugated to $a_2$ then the minimal length of a conjugating word is equal to $\gamma(a_1, a_2)$ and thus $\Gamma(n_1, n_2)$ is computable since it is the maximum of the $\gamma(a_1, a_2)$.
  
  \item Let $\mathcal{P}$ be have a solvable Conjugacy Problem. Since the set of relation of $\mathcal{P}$ is enumerable then by Remark \ref{CSP}, $\mathcal{P}$ has a solvable Conjugacy Search Problem. Let $n_1, n_2$ be natural numbers; since $\mathcal{P}$ is finitely generated, then the number of pairs of words $(a_1, a_2)$ such that $|a_1|\leqslant n_1$ and $|a_2|\leqslant n_2$ is finite. Let us solve the Conjugacy Search Problem for these pairs of words; this gives for any $(a_1, a_2)$ such that $a_1$ is conjugated to $a_2$ an upper bound for $\gamma(a_1, a_2)$. The maximum of these bounds is an upper bound for $\Gamma(n_1, n_2)$ which is then bounded above by a computable function.  \end{enumerate}
\end{proof}

\begin{corollary} \label{} Let $\mathcal{P}$ be a finitely generated presentation whose set of relators is enumerable and suppose that $\mathcal{P}$ presents a group. Then $\mathcal{P}$ has a solvable Conjugacy Problem if and only if $\mathcal{P}$ has a solvable Word Problem and $\Gamma$ is computable. \end{corollary}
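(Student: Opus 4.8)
The plan is to deduce the corollary directly from Proposition \ref{CP}, using in addition only the fact recorded in the Remark after the definition of the Conjugacy Problem: for a group presentation a solvable Conjugacy Problem forces a solvable Word Problem, since a word equals $1$ in the group exactly when it is conjugate to $1$. So the whole argument is a matter of unwinding the two directions of the biconditional, with Proposition \ref{CP} supplying the substantive content.

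First I would reconcile the standing hypotheses. Proposition \ref{CP} requires the set of relations of the (monoid) presentation to be enumerable, whereas here we are given that the set of relators is enumerable. Since $\mathcal{P}$ presents a group, $u=v$ holds precisely when $\rho(uv^{-1})$ lies in the normal closure $\mathcal{N}$, i.e. when $\rho(uv^{-1})$ is a relator; enumerating triples $(u,v,n)$, computing $\rho(uv^{-1})$, and comparing it with the $n$-th enumerated relator then enumerates the set of relations, using that $X$ is finite and $\rho$ is computable. Thus the enumerability hypothesis of Proposition \ref{CP} is met.

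For the forward implication I would assume the Conjugacy Problem solvable. By the Remark the Word Problem is then solvable, so every hypothesis of Proposition \ref{CP} holds, and that proposition yields both that $\Gamma$ is bounded above by a computable function and that, in this case, $\Gamma$ is itself computable. Hence the Word Problem is solvable and $\Gamma$ is computable. For the reverse implication I would assume the Word Problem solvable and $\Gamma$ computable; a computable function bounds itself above, so $\Gamma$ is bounded above by a computable function, and Proposition \ref{CP} (whose remaining hypotheses again hold) gives that the Conjugacy Problem is solvable.

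I do not expect any genuine obstacle here: the corollary is an immediate specialization of Proposition \ref{CP} to the group case, where the Remark removes the need to assume solvability of the Word Problem separately. The only point warranting an explicit line is the translation between an enumerable set of relators and an enumerable set of relations, which is routine given the computability of $\rho$ and the finiteness of $X$.
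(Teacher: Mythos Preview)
Your proposal is correct and matches the paper's intended argument: the corollary is stated without proof, being an immediate consequence of Proposition~\ref{CP} together with the Remark that for groups a solvable Conjugacy Problem implies a solvable Word Problem. Your explicit check that enumerability of the set of relators yields enumerability of the set of relations is a reasonable clarification of the only point the paper leaves implicit.
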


\section{The Membership Problem}

\begin{definition} \label{} \rm Let $\mathcal{P}=< X \, | \, R >$ be a presentation for a monoid $M$ and let $A$ be a subset of $\mathcal{M}(X)$. We say that $\mathcal{P}$ has \textit{has a solvable Membership Problem on $A$} if given $w\in \mathcal{M}(X)$ there exists an algorithm which determines whether the element of $M$ represented by $w$ belongs to the submonoid of $M$ generated by $A$, that is whether there exist elements $a_1, \cdots, a_m\in A$ such that $(w, a_1 \cdots a_m)$ is a relation for $\mathcal{P}$. This is equivalent to say that the set
      \begin{equation} \label{ituzi}  \{(w, a_1, \cdots, a_m) : w \in \mathcal{M}(X), \, a_i \in A \,\, \textrm{and} \,\, w=a_1 \cdots a_m \,\, \mathrm{in} \,\, M\} \end{equation}
is decidable.  
   
   We say that $\mathcal{P}$ \textit{has a solvable Membership Search Problem on $A$} if it has a solvable Membership Problem on $A$ and if there exists an algorithm which finds elements $a_1, \cdots, a_m$ of $A$ such that $w=a_1 \cdots a_m$ in $M$. \end{definition}

The Membership Problem is also called \textit{generalized word problem} or \textit{uniform word problem}.

 If $\mathcal{P}$ presents a group then the solvability of the Membership Problem on $A=\{1\}$ is equivalent to that of the Word Problem.

\begin{problem} Find a monoid presentation with solvable Membership Problem on $A=\{1\}$ but unsolvable Word Problem; or prove that the solvability of the Membership Problem on $A=\{1\}$ implies that of the Word Problem. \end{problem}

We refer the reader to the introduction of \cite{MMS} for a list of groups for which the Membership Problem is known to be solvable or not.

\begin{remark} \label{} \rm If $\mathcal{M}(X)$, $\langle R \rangle$ and $A$ are enumerable, then the set (\ref{ituzi}) is enumerable. Indeed in this case the Cartesian product of $\mathcal{M}(X)$, $\langle R \rangle$ and the set of finite sequences of elements of $A$ is enumerable, that is it is equal to
     $$\{\alpha_n=(w_n, a_1, \cdots, a_{k_n}, u_n, v_n) : n\in \mathbb{N}, \, w_n \in \mathcal{M}(X), \, a_i \in A, \,(u_n, v_n)\in \langle R \rangle \},$$
    where there is an algorithm whose $n$-th output is $\alpha_n$. Consider the algorithm whose $n$-th output is $(w_n, a_1, \cdots, a_{k_n})$ if $w_n = u_n$ and $v_n=a_1 \cdots a_{k_n}$, otherwise the output is empty; this algorithm enumerates (\ref{ituzi}).  \end{remark}

\begin{remark} \label{icel} \rm If $A$ and $\langle R \rangle$ are enumerable, then the solvability of the Membership Problem on $A$ is equivalent to that of the Membership Search Problem on $A$. Indeed the Cartesian product of $\langle R \rangle$ and the set of finite sequences of elements of $A$ is enumerable and thus is equal to
     $$\{\beta_n=(a_1, \cdots, a_{k_n}, u_n, v_n) : n\in \mathbb{N}, \, a_i \in A, \, (u_n, v_n)\in \langle R \rangle\},$$ 
   where there is an algorithm whose $n$-th output is $\beta_n$. Let $w$ belong to the submonoid generated by $A$ and consider the algorithm whose $n$-th output is $(a_1, \cdots, a_{k_n})$ if $w = u_n$ and $a_1 \cdots a_{k_n}=v_n$, otherwise the output is empty. Then necessarily for some $n$ the output will be non-empty.  \end{remark}

   \begin{problem} Find a presentation whose Membership Search Problem on a some set $A$ has a complexity greater than that of the Membership Problem on $A$; or prove that the two complexities are equal for any presentation and any set $A$. \end{problem}

\begin{definition} \label{dist} \rm Let $\mathcal{P}=<  X \, | \, R  >$ be a presentation for the monoid $M$, let $A$ be a subset of $\mathcal{M}(X)$, let $T$ be the submonoid of $M$ generated by $A$ and let $t\in T$. Then there exist $a_1, \cdots, a_m\in A$ such that $t=a_1 \cdots a_m$. The least of such $m$ is called \textit{the length of $t$ in $A$} and denoted $\textsf{length}_A(t)$.
 
 Let $n$ be a natural number: the \textit{distortion at $n$ of $\mathcal{P}$ relative to $A$} is
     $$\textsf{Dist}_A(n)=\textsf{max}\{\textsf{length}_A(t) : t \in \mathcal{M}(X), \, |t| \leqslant n, $$
     $$t=a_1 \cdots a_m \, \textrm{in } M \, \,\textrm{for some}  \,a_1, \cdots, a_m\in A \}.$$   \end{definition}

This definition of distortion function is due B. Farb (\cite{Farb}, Sec. 2). A slightly different notion of distortion was previously been given by Gromov (\cite{Grom2}, Chap. 3).

\begin{remark} \label{} \rm  Let $\mathcal{P}=<  X \, | \, R  >$ be a presentation for the monoid $M$, let $T$ be a submonoid of $M$ and let $A$ and $B$ be subsets of $\mathcal{M}(X)$ which generate $T$. Set $h:=\sup \{\textsf{length}_B(a) : a \in A\}$ and $h':=\sup \{\textsf{length}_A(b) : b \in B\}$. If $h$ and $h'$ are finite then 
           $$\textsf{length}_B(t) \leqslant h \, \textsf{length}_A(t) \leqslant h  h' \, \textsf{length}_B(t)$$
 and thus
    $$\textsf{Dist}_B \leqslant h \, \textsf{Dist}_A \leqslant h h' \, \textsf{Dist}_B,$$
      that is $\textsf{Dist}_A$ and $\textsf{Dist}_B$ are strongly equivalent (Definition \ref{equiv}). We observe that if $A$ and $B$ are finite then $h$ and $h'$ are finite.

We also observe that if $A$ is finite then there exists a finite subset $B'$ of $B$ such that $T$ is generated by $B'$. Indeed for any $a\in A$ there exist $b_1, \cdots, b_n$ elements of $B$ such that $a$ is a monomial in $b_1, \cdots, b_n$. Then the subset of $B$ whose elements are these $b_i$ for all the $a\in A$ is a finite subset of $B$ and generates $T$ since $A$ does.
      \end{remark}

  \begin{lemma} \label{carl} Let $\mathcal{P}=<  X \, | \, R  >$ be a presentation for a monoid $M$, let $A$ be a finite subset of $\mathcal{M}(X)$ and suppose that given a word on $X$ and one on $A$ it is decidable whether these two words are equal in $M$ (in particular let the Word Problem be solvable for $\mathcal{P}$). If $\textsf{Dist}_A$ is bounded above by a computable function then $\mathcal{P}$ has a solvable Membership Problem; if moreover $\mathcal{P}$ is finitely generated then $\textsf{Dist}_A$ is computable. \end{lemma}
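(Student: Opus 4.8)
The plan is to exploit the computable bound $f$ on $\textsf{Dist}_A$ to truncate an otherwise unbounded search, in the same spirit as the passage from the Word Search Problem to the Word Problem in Proposition \ref{gersho}.

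For the first assertion, suppose $\textsf{Dist}_A(n)\leqslant f(n)$ with $f$ computable and let $w\in\mathcal{M}(X)$ be given. The key observation is that if $w$ represents an element of the submonoid generated by $A$, then by the very definition of the distortion function $\textsf{length}_A(w)\leqslant\textsf{Dist}_A(|w|)\leqslant f(|w|)$; hence $w=a_1\cdots a_m$ in $M$ for some $a_i\in A$ with $m\leqslant f(|w|)$. Since $A$ is finite, there are only finitely many words $a_1\cdots a_m$ on the alphabet $A$ with $m\leqslant f(|w|)$, and these can be listed effectively once $f(|w|)$ has been computed. For each such word I would test whether $w=a_1\cdots a_m$ in $M$, which is decidable by hypothesis. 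Then $w$ represents an element of the submonoid if and only if at least one of these finitely many tests succeeds, and this solves the Membership Problem.

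For the second assertion, assume in addition that $X$ is finite. To compute $\textsf{Dist}_A(n)$ I would first list the finitely many words $t\in\mathcal{M}(X)$ with $|t|\leqslant n$. For each such $t$, the algorithm of the first part (available precisely because $\textsf{Dist}_A\leqslant f$) decides whether $t$ lies in the submonoid generated by $A$. For those $t$ that do, I would compute $\textsf{length}_A(t)$ by testing, for $m=0,1,2,\ldots$, whether some product $a_1\cdots a_m$ of $m$ elements of $A$ equals $t$ in $M$; the least $m$ that succeeds is $\textsf{length}_A(t)$. This search terminates because $\textsf{length}_A(t)\leqslant f(|t|)\leqslant f(n)$. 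Finally $\textsf{Dist}_A(n)$ is the maximum of the finitely many values $\textsf{length}_A(t)$ so obtained, hence is computable.

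The only genuine obstacle is conceptual rather than technical: without a computable upper bound, the naive procedure ``enumerate all products of elements of $A$ and compare with $w$'' merely semidecides membership, since a negative instance would never terminate. The role of $f$ is exactly to cut this enumeration down to a finite, effectively computable list, thereby converting the semidecision procedure into a genuine decision procedure. The finiteness of $A$ guarantees that each truncated list is finite, while the finiteness of $X$ is what additionally permits the maximum defining $\textsf{Dist}_A(n)$ to range over a finite set of words.
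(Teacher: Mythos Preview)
Your proof is correct and follows essentially the same route as the paper: bound the search for $A$-factorizations by $f(|w|)$ to decide membership, then in the finitely generated case enumerate all $t$ with $|t|\leqslant n$, compute $\textsf{length}_A(t)$ for those in the submonoid by the same bounded search, and take the maximum. One cosmetic point: the chain $\textsf{length}_A(t)\leqslant f(|t|)\leqslant f(n)$ tacitly assumes $f$ is non-decreasing; it is cleaner to write $\textsf{length}_A(t)\leqslant\textsf{Dist}_A(|t|)\leqslant\textsf{Dist}_A(n)\leqslant f(n)$, using that $\textsf{Dist}_A$ itself is non-decreasing.
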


\begin{proof} Let $T$ be the submonoid of $M$ generated by $A$, let $\textsf{Dist}_A(n)\leqslant f(n)$ where $f(n)$ is a computable function, let $u$ be a word on $X$ and let $m=|u|$. Then $u$ belongs to $T$ if and only there exist $a_1, \cdots, a_k\in A$ with $k\leqslant f(m)$ such that $u=a_1 \cdots a_k$ in $M$. Thus one can decide whether $u$ belongs to $T$ by deciding whether $u= a_1 \cdots a_k$ in $M$ for all $a_1, \cdots, a_k\in A$ and $k\leqslant f(m)$. Since $A$ is finite and $f$ is computable then this is done in finite time.

Let $\mathcal{P}$ be finitely generated. Then for every natural $n$ there are finitely many words on $X$ of length $n$. Let $u$ be one of these words and decide whether $u=v$ in $M$ for every $v\in T$ such that $\textsf{length}_A(v) \leqslant f(n)$. If $u$ belongs to $T$ then $\textsf{length}_A(u)$ is the least of those $\textsf{length}_A(v)$ when $u=v$. Thus $\textsf{length}_A$ is computable and therefore $\textsf{Dist}_A$ is computable. \end{proof}

  \begin{proposition} \label{MP} Let $\mathcal{P}=<  X \, | \, R  >$ be a finitely generated presentation for a monoid $M$ and let the set of relations of $\mathcal{P}$ be enumerable. Let $A$ be a finite subset of $\mathcal{M}(X)$ and suppose that given a word on $X$ and one on $A$ it is decidable whether these two words are equal in $M$ (in particular let the Word Problem be solvable for $\mathcal{P}$). Then $\mathcal{P}$ has a solvable Membership Problem on $A$ if and only if $\textsf{Dist}_A$ is computable. \end{proposition}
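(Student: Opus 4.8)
The plan is to treat the two implications separately, since one of them is essentially packaged in the already-proved Lemma \ref{carl} while the other needs a short direct argument. Throughout, let $T$ denote the submonoid of $M$ generated by $A$, and recall that $\textsf{Dist}_A(n)$ is the maximum of $\textsf{length}_A(w)$ over words $w$ on $X$ with $|w|\le n$ that lie in $T$. The standing hypotheses I will lean on are that $\mathcal{P}$ is finitely generated, that equality in $M$ between a word on $X$ and a word on $A$ is decidable, and that $\langle R\rangle$ is enumerable.

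For the implication ``$\textsf{Dist}_A$ computable $\Rightarrow$ solvable Membership Problem on $A$'', essentially nothing new is required: if $\textsf{Dist}_A$ is computable then it is in particular bounded above by a computable function, namely itself, so Lemma \ref{carl} applies directly and yields that $\mathcal{P}$ has a solvable Membership Problem on $A$. So the content of the proposition lies entirely in the converse direction.

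For ``solvable Membership Problem on $A$ $\Rightarrow$ $\textsf{Dist}_A$ computable'', the plan is to describe an algorithm computing $\textsf{Dist}_A(n)$ for a given $n$. First I would use finite generation of $\mathcal{P}$ to list the finitely many words $w$ on $X$ with $|w|\le n$. For each such $w$, I would run the Membership Problem algorithm to decide whether $w\in T$; this is legitimate precisely because we assume the Membership Problem on $A$ solvable. For every $w$ that is found to lie in $T$, I would then compute $\textsf{length}_A(w)$ by searching for the shortest factorization: for $m=1,2,\dots$ run through the finitely many products $a_1\cdots a_m$ with each $a_i\in A$ (finitely many because $A$ is finite) and test, using the assumed decidability of equality between a word on $X$ and a word on $A$ in $M$, whether any of them equals $w$; the least $m$ for which a match occurs is $\textsf{length}_A(w)$. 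Finally $\textsf{Dist}_A(n)$ is the maximum of these finitely many values $\textsf{length}_A(w)$, computed in finite time.

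The delicate point is the termination of the inner search for $\textsf{length}_A(w)$. A blind search over increasing $m$ need not halt for a word $w\notin T$, since no factorization exists; this is exactly why I must first decide membership with the Membership Problem algorithm and only launch the minimal-length search for those $w$ that are certified to lie in $T$, for which a factorization is guaranteed and hence the search halts. I expect this interplay — decide membership first, then search for the optimal factorization — to be the crux of the argument. I would also remark that, alternatively, one may obtain a factorization via the Membership Search Problem (available by Remark \ref{icel}, since $A$ is finite, hence enumerable, and $\langle R\rangle$ is enumerable); but since the search problem only furnishes \emph{some} factorization rather than a shortest one, the explicit search over small $m$ is still needed to pin down $\textsf{length}_A(w)$, so the direct approach above is the cleaner route.
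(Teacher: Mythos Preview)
Your proposal is correct and follows essentially the same strategy as the paper: invoke Lemma \ref{carl} for one direction, and for the converse list the finitely many words of length at most $n$, decide membership for each, and compute $\textsf{length}_A$ for those in $T$. The only cosmetic difference is that the paper first invokes Remark \ref{icel} to obtain \emph{some} factorization (hence an upper bound for $\textsf{length}_A$) and then appeals to the argument of Lemma \ref{carl} to pin down the exact value, whereas you bypass this by searching directly over $m=1,2,\dots$ once membership is certified; both routes terminate for the same reason and yield the same algorithm.
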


\begin{proof} Let $T$ be the submonoid of $M$ generated by $A$. By Lemma \ref{carl} we have that if $\textsf{Dist}_A$ is computable then the Membership Problem on $A$ is solvable.

  Let $\mathcal{P}$ have a solvable Membership Problem; by Remark \ref{icel}, $\mathcal{P}$ has a solvable Membership Search Problem. Let $n$ be a natural number; since $\mathcal{P}$ is finitely generated, then the number of words of length bounded above by $n$ is finite. If $u$ is one of these words solve the Membership Search Problem on $u$. If $u$ belongs to $T$ then this gives an expression of $u$ as monomial on $A$ and thus it gives an upper bound for $\textsf{langth}_A(u)$. The maximum of these bounds is an upper bound for $\textsf{Dist}_A$ which is then bounded above by a computable function. With the same argument of the proof of Lemma \ref{carl} one proves that $\textsf{Dist}_A$ is computable.  \end{proof}

The first proof of Proposition \ref{MP} has been given in (\cite{MMS}, Prop. 1.1), but earlier a slightly less general statement has been proved in (\cite{Farb}, Prop. 2.1).

\section{Tietze transformations} \label{tt}

\begin{definition} \label{Transf} \rm Let $\mathcal{P}=<  X \, | \, R  >$ be a monoid presentation and let us consider the following transformations on $\mathcal{P}$ which consist in adding or deleting “superfluous” generators and defining relations. They are called \textit{Tietze transformations}: 

\begin{enumerate}

  \item Let $Q\subset \langle  R \rangle$; then replace $R$ with $R \cup Q$, that is replace $<  X \, | \, R  >$ with $< X \, | \, R \cup Q >$.
  
  \item Let $Q\subset R$ be such that $\langle R\setminus{Q} \rangle=\langle R \rangle$;  then replace $R$ with $R\setminus{Q}$, that is replace $<  X \, | \, R  >$ with $< X \,\, | \, \, R\setminus{Q} >$.

  \item Let $U \subset \mathcal{M}(X)$ and for every $u\in U$ let $y_u$ be an element not belonging to $\mathcal{M}(X)$ and such that if $u$ and $u'$ are distinct elements of $U$ then $y_u\neq y_{u'}$. Set $Y:=\{y_u : u \in U\}$ and $T:=\{(y_u, u): u \in U\}$; then replace $X$ with $X\cup Y$ and $R$ with $R \cup T$, that is replace $<  X \, | \, R  >$ with $<  X \cup Y \, | \, R \cup T >$.
  
  \item Let $Y\subset X$ and suppose that for every $y \in Y$ there exists a word $u_y$ not containing $z$ for every $z \in Y$ and such that $(y, u_y) \in R$. Let $\varphi$ be the homomorphism from $\mathcal{M}(X)$ to $\mathcal{M}(X \setminus{Y})$ which sends any $y \in Y$ to $u_y$ and any $x \in X\setminus{Y}$ to itself. Then set $T:=\{(y, u_y) : y \in Y\}$, replace $X$ with $X\setminus{Y}$ and replace $R$ with $V=\{\left(\varphi(a), \varphi(b)\right) : (a, b) \in R\setminus{T}\}$, that is replace $<  X \, | \, R  >$ with $< X \setminus{Y} \, | \, V >$.\end{enumerate}

  Let $\mathcal{P}=<  X \, | \, R  >$ be a group presentation and let us denote by $\langle \langle R \rangle \rangle$ the set of relators of $\mathcal{P}$, that is the normal subgroup of $\mathcal{F}(X)$ normally generated by $R$. The \textit{Tietze transformations} for a group presentation are defined in the same way but with the following modifications: in 1 and 2 replace $\langle R \rangle$ and $\langle R\setminus{Q} \rangle$ with $\langle \langle R \rangle \rangle$ and $\langle \langle R\setminus{Q} \rangle \rangle$ respectively\footnote{we observe that $\langle R \rangle$ is a subset of $\mathcal{M}(X)^2$ while $\langle \langle R \rangle \rangle$ is a subset of $\mathcal{F}(X)$.}. In 3, $U \subset \mathcal{F}(X)$, $y_u$ is an element not belonging to $\mathcal{F}(X)$ and $T:=\{y^{-1}_u u: u \in U\}$. In 4, for every $y \in Y$ there exists a word $u_y$ not containing neither $z$ nor $z^{-1}$ for every $z \in Y$ and such that $y^{-1} u_y \in R$; $\varphi$ is the homomorphism from $\mathcal{F}(X)$ to $\mathcal{F}(X \setminus{Y})$ which sends any $y \in Y$ to $u_y$ and any $x \in X\setminus{Y}$ to itself; finally we replace $R$ with $\varphi(R\setminus{T})$.  \end{definition}

The transformations 1 and 2 add or delete respectively superfluous defining relations; transformation 3 and 4 add or delete superfluous generators and defining relations. Transformations of types 1 and 2 are inverse one of the other; the inverse of a transformations of type 3 is a transformation of type 4. The inverse of a transformations of type 4 is a transformation of type 3, followed by one of type 1 and by one of type 2; this is because first we have to add $Y$ to the generators and $T$ to the defining relators obtaining $< X \, | \, T \cup V >$, then add $R$ and finally delete $V$ from the set of defining relations.

We say that two presentations are \textit{of the same kind} if they are both monoid or group presentations.

It is obvious that by applying Tietze transformations to a presentation we obtain a presentation for the same monoid. The converse is also true, that is if $\mathcal{P}$ and $\mathcal{P}'$ are presentations of the same kind for the same monoid then there exists a finite sequence of Tietze transformations which applied to $\mathcal{P}$ gives $\mathcal{P}'$ (see\footnote{Th. 1.5 of \cite{MKS} is proved for group presentations but the same argument holds also for monoid presentations with the obvious modifications.} Th. 1.5 of \cite{MKS}). In fact by the proof of Th. 1.5 of \cite{MKS} we have the following stronger result:

\begin{proposition} \label{Tietze} Two presentations of the same kind for the same monoid can be obtained one from the other by applications of at most four Tietze transformations, namely a transformation of type 3 followed by one of type 1, then by one of type 2 and finally by one of type 4 (some of these transformations can be empty). \end{proposition}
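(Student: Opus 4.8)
The plan is to follow the classical argument for Th.~1.5 of \cite{MKS}, exploiting the fact that each of the four Tietze transformations of Definition \ref{Transf} acts on an entire \emph{set} of generators or relations at once; hence a single application of each type will suffice to absorb all the required additions and deletions. Write $\mathcal{P}=<X\,|\,R>$ and $\mathcal{P}'=<X'\,|\,R'>$ for two monoid presentations of the same monoid $M$, and after relabelling the generators of $\mathcal{P}'$ assume $X\cap X'=\emptyset$. Since $X$ generates $M$, choose for each $x'\in X'$ a word $w_{x'}\in\mathcal{M}(X)$ representing the same element of $M$; since $X'$ generates $M$, choose for each $x\in X$ a word $v_x\in\mathcal{M}(X')$ representing the same element. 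Put $T:=\{(x',w_{x'}):x'\in X'\}$ and $S:=\{(x,v_x):x\in X\}$.

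First I apply one transformation of type 3 to $\mathcal{P}$, introducing all the new generators $X'$ together with the relations $T$; this yields $<X\cup X'\,|\,R\cup T>$, still a presentation of $M$. Next I apply one transformation of type 1, adjoining the whole set $R'\cup S$: every pair in $R'\cup S$ holds in $M$, hence lies in $\langle R\cup T\rangle$, which equals the kernel of the canonical map $\pi\colon\mathcal{M}(X\cup X')\to M$ because the current presentation presents $M$; so this addition is legitimate and produces $<X\cup X'\,|\,R\cup T\cup R'\cup S>$. Then I apply one transformation of type 2, deleting $R\cup T$ and keeping $R'\cup S$. Finally I apply one transformation of type 4 to eliminate every old generator $x\in X$ by means of the relation $(x,v_x)\in S$, whose right-hand side $v_x$ involves only generators of $X'$; since the elements of $R'$ already involve only $X'$ they are fixed by the substitution homomorphism, and the result is exactly $<X'\,|\,R'>=\mathcal{P}'$. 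Some of these four steps are empty when $X$, $X'$ or the respective relation sets already coincide, which covers the trivial cases.

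The one step needing genuine justification is the type-2 deletion, i.e. the claim that $R\cup T$ is redundant modulo $R'\cup S$, equivalently that
$$\langle R'\cup S\rangle=\langle R\cup T\cup R'\cup S\rangle$$
as congruences on $\mathcal{M}(X\cup X')$. Both sides are contained in $\ker\pi$ because all the listed relations hold in $M$, so it suffices to prove that $<X\cup X'\,|\,R'\cup S>$ already presents $M$, whence both congruences equal $\ker\pi$ and are therefore equal to each other. This is where I expect the main work to lie, and I would argue it by an elimination/rewriting argument: using the relations of $S$, any word $u\in\mathcal{M}(X\cup X')$ is $\langle R'\cup S\rangle$-equivalent to a word $\bar u\in\mathcal{M}(X')$ obtained by substituting $v_x$ for each occurrence of $x$. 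If $\pi(u)=\pi(u')$ then $\bar u$ and $\bar u'$ are words on $X'$ with the same image in $M=\mathcal{M}(X')/\langle R'\rangle$, hence $\bar u\equiv\bar u'$ modulo $\langle R'\rangle\subseteq\langle R'\cup S\rangle$; combining the three equivalences gives $u\equiv\bar u\equiv\bar u'\equiv u'$. Thus $\ker\pi\subseteq\langle R'\cup S\rangle$, the reverse inclusion being clear, so $<X\cup X'\,|\,R'\cup S>\cong M$ as required.

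For group presentations the same four steps work verbatim once the transformations are read in the group version of Definition \ref{Transf} (relators in place of relations, the normal closure $\langle\langle\,\cdot\,\rangle\rangle$ in place of $\langle\,\cdot\,\rangle$, and the words $w_{x'}$, $v_x$ taken in $\mathcal{F}(X)$ and $\mathcal{F}(X')$ respectively); the redundancy verification is identical, with $\ker\pi$ replaced by the kernel of the corresponding homomorphism of free groups.
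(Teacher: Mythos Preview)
Your argument is correct and is precisely the classical construction from Th.~1.5 of \cite{MKS} that the paper invokes without spelling out the details: the four-step sequence (type~3, then type~1, then type~2, then type~4) and your verification of the type~2 deletion via the rewriting $u\mapsto\bar u\in\mathcal{M}(X')$ are exactly the standard proof. One tiny point of bookkeeping: since the type~3 transformation in Definition~\ref{Transf} is indexed by a set $U\subset\mathcal{M}(X)$ of \emph{words} (with one new generator per word), you should choose the $w_{x'}$ pairwise distinct so that the new generators are in bijection with $X'$ --- this is always possible when $X\neq\emptyset$, and the case $X=\emptyset$ is trivial.
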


\begin{definition} \label{} \rm A Tietze transformation is said \textit{finite} if it adds or deletes only finitely many generators or defining relations, that is if the sets $Q$, $T$ and $Y$ of Definition \ref{Transf} are finite\footnote{In the literature an \textit{elementary Tietze transformation} is defined as a Tietze transformation which adds or deletes exactly one generator and/or defining relation. An elementary Tietze transformation is obviously finite and any finite Tietze transformation is obtained by repeated applications of elementary Tietze transformations.}. In the same way a Tietze transformation is said \textit{decidable} if the sets $Q$, $T$ and $Y$ are decidable. \end{definition}

By\footnote{The proof of Cor. 1.5 of \cite{MKS} is given for elementary Tietze transformations but the same argument holds also for finite and decidable Tietze transformations} Cor. 1.5 of \cite{MKS} we have that two finite [respectively decidable] presentations of the same kind define the same monoid if and only if one can be obtained from the other by repeated applications of finite [respectively decidable] Tietze transformations. Moreover Proposition \ref{Tietze} holds also in the special cases of finite or decidable presentations, that is two finite or decidable presentations of the same kind for the same monoid can be obtained one from the other by applications of at most four Tietze transformations of the types specified in Proposition \ref{Tietze}.

\smallskip

 Let us consider a graph whose vertices are all the monoid presentations and with an edge joining two presentations if one can be obtained from the other by application of a Tietze transformation; any connected component of this graph corresponds to an isomorphism class of monoids. Furthermore by Proposition \ref{Tietze}, two vertices are either non-connected or they are connected by a path of length at most four.

 Moreover let $M$ be a monoid and let $\mathcal{P}$ and $\mathcal{P}'$ be two finite monoid presentations of $M$. Then there is a path (of length at most four) from $\mathcal{P}$ to $\mathcal{P}'$ whose edges are finite [respectively decidable] Tietze transformations.

The same thing can be said if we take as vertices of the graph all the group presentations.

  \begin{definition} \label{btt} \rm  We say that a Tietze transformation of type 1 or 2 is \textit{of bounded derivation} if it adds or deletes a set of relations whose derivational length is bounded, that is if $\textsf{sup}\{\textsf{dl}(q) : q\in Q\}$ is finite where $Q$ is as in 1 and 2 of Definition \ref{Transf}. Analogously we define Tietze transformations of type 1 or 2 \textit{of bounded work}, \textit{of bounded area} and of \textit{of bounded group work} by replacing $\textsf{dl}$ by $\textsf{work}$, $\textsf{area}$ and $\textsf{gwork}$ respectively\footnote{of course if we talk about Tietze transformations of bounded derivation or work then this means that we are dealing with a monoid presentation, while for Tietze transformations of bounded area or group work the presentation is a group presentation}.

    We say that a Tietze transformation of type 3 or 4 is of \textit{bounded length} if it adds or deletes a set of generators whose length is bounded, that is if $\textsf{sup}\{|u| : u\in U\}$ or $\textsf{sup}\{|u_y| : y\in Y\}$ (respectively for 3 or 4 of Definition \ref{Transf}) are finite. 
    
 A Tietze transformation of bounded derivation, work, area or group work is said a \textit{bounded} Tietze transformation.    \end{definition}

 A finite Tietze transformation is necessarily bounded. If the presentation is finitely generated then any Tietze transformation of bounded length is necessarily finite. But if the presentation is finitely related then a Tietze transformation of bounded derivation, work, area or group work needs not be finite.

   \begin{remark} \label{stef} \rm Let $\mathcal{P}$ be a presentation and let $\mathcal{P}'$ be the presentation obtained from $\mathcal{P}$ by application of a bounded Tietze transformation. We want to determine the relationship between the derivational length, the work, the area, the group work and the length of group elements in $\mathcal{P}$ and $\mathcal{P}'$.

   Suppose that $\mathcal{P}'$ is obtained from $\mathcal{P}$ by application of a Tietze transformation of type 1 of bounded derivation and let $c$ be the maximal derivational length of defining relations added. Let $\textsf{dl}$ and $\textsf{DL}$ the functions “derivational length” relative to $\mathcal{P}$ (see Definition \ref{derlen}) and let $\textsf{dl}'$ and $\textsf{DL}'$ those relative to $\mathcal{P}'$. Then we have for a relation $(u, v)$ that
              $$\textsf{dl}'(u, v) \leqslant \textsf{dl}(u, v) \leqslant c \, \textsf{dl}'(u, v)$$
  and then that
      \begin{equation} \label{T1} \textsf{DL}' \leqslant \textsf{DL} \leqslant c \, \textsf{DL}'.\end{equation}
      
 In the same way one proves that if $\mathcal{P}'$ is obtained from $\mathcal{P}$ by application of a Tietze transformation of type 1 of work, area or group work bounded by a constant $c$ then
          $$\Omega' \leqslant \Omega \leqslant c \, \Omega',$$
          $$\Delta' \leqslant \Delta \leqslant c \, \Delta',$$
and    
       $$\Omega_g' \leqslant \Omega_g \leqslant c \, \Omega_g',$$
where $\Omega$, $\Delta$ and $\Omega_g$ are the function “work”, the Dehn function and the function “group work” relative to $\mathcal{P}$ and $\Omega'$, $\Delta'$ and $\Omega'_g$ those relative to $\mathcal{P}'$.

 Let us now suppose that $\mathcal{P}'$ is obtained from $\mathcal{P}$ by application of a Tietze transformation of type 3 of bounded length. We will use the notation of 3 of Definition \ref{Transf}. There exists a natural number $c$ such that $|u|\leqslant c$ for every $u\in U$. Let $\varphi$ be the function from $\mathcal{M}(X\cup Y)$ to $\mathcal{M}(X)$ sending any element of $X$ to itself and any $y_u\in Y$ to $u$. We have that $\varphi$ is the identity on $\mathcal{M}(X)$, in particular $\big(\varphi(u), \varphi(v)\big)=(u,v)$ for every $(u,v) \in R$, and that $\big(\varphi(y_u), \varphi(u)\big)=(u,u)$ for every $(u, y_u) \in T$. Moreover $|\varphi(w)|\leqslant c |w|$.

 If $\langle R \rangle$ and $\langle R\cup T \rangle$ are the congruences of $\mathcal{M}(X)$ and $\mathcal{M}(X\cup Y)$ generated by $R$ and $R \cup T$ respectively, then we have that $\langle R \rangle \subset \langle R\cup T \rangle$ and that $\langle R \rangle=\{\big(\varphi(u), \varphi(v)\big) : (u,v) \in \langle R\cup T \rangle\}$. Let $(a, b)$ be a 1-step derivation by means of $(u,v) \in R\cup T$; if $(u,v) \in R$ then $\big(\varphi(u), \varphi(v)\big)$ is a 1-step $R$-derivation, if $(u,v) \in T$ then $\varphi(u)=\varphi(v)$.

   Let $(u, v)$ be an $R\cup T$-relation and let $(u=u_0, u_1, \cdots, u_{k-1}, v=u_k)$ be an $R\cup T$-derivation where $k=\textsf{dl}'(u, v)$. Then there is an $R$-derivation from $\varphi(u)$ to $\varphi(v)$ of length at most $k$, that is
       $$\textsf{dl}\big(\varphi(u), \varphi(v)\big)\leqslant \textsf{dl}'(u,v).$$
    If $(u, v)$ is an $R$-relation then since $\big(\varphi(u), \varphi(v)\big)=(u,v)$ and since obviously $\textsf{dl}'(u, v) \leqslant \textsf{dl}(u, v)$ then $\textsf{dl}(u, v)=\textsf{dl}'(u, v)$. In particular we have that 
                      $$\textsf{DL} \leqslant \textsf{DL}'.$$
    Let $(u, v)$ be an $R\cup T$-relation but not an $R$-relation. There exist $R\cup T$-derivations from $u$ to $\varphi(u)$ and from $v$ to $\varphi(v)$ of lengths at most $|u|$ and $|v|$. Since there is an $R$-derivation from $\varphi(u)$ to $\varphi(v)$ of length $\textsf{dl}\big(\varphi(u), \varphi(v)\big)$, this gives an $(R\cup T)$-derivation from $u$ to $v$ of length $\textsf{dl}\big(\varphi(u), \varphi(v)\big) + |u| +|v|$, that is 
             $$\textsf{dl}'(u, v)\leqslant \textsf{dl}\big(\varphi(u), \varphi(v)\big) + |u| +|v|$$ 
     and then since $|\varphi(u)|\leqslant c|u|$ and $|\varphi(v)|\leqslant c|v|$ we have that 
                     $$\textsf{DL}'(m, n)\leqslant \textsf{DL}(c m, c n)+m+n,$$ 
that is 
          \begin{equation} \label{T3} \textsf{DL}(m, n) \leqslant \textsf{DL}'(m, n)\leqslant \textsf{DL}(c m, c n)+m+n. \end{equation}

   With an analogous argument one proves that 
 $$\Omega(m, n) \leqslant \Omega'(m, n)\leqslant \Omega(c m, c n)+(c+1)(m+n),$$
   $$\Delta(n) \leqslant \Delta'(n)\leqslant \Delta(c n)+n$$
and
    $$\Omega_g(n) \leqslant \Omega_g'(n)\leqslant \Omega_g(cn)+(c+1)n.$$

  Let us now determine the relationship between the length of elements in $\mathcal{P}$ and $\mathcal{P}'$. If $M$ is the monoid presented by $\mathcal{P}$ and $\mathcal{P}'$ and if $a$ is any element of $M$ then 
          \begin{equation} \label{hagen} |a|' \leqslant |a| \leqslant c|a|'\end{equation}  
where $|\cdot|$ and $|\cdot|'$ denote respectively the lengths with respect to the sets of generators of $\mathcal{P}$ and $\mathcal{P}'$ respectively. \end{remark}

 \begin{remark} \label{total} \rm Let $\mathcal{P}:=< X \, |  \, R >$ be a presentation such that $X$ and $\langle R \rangle$ are enumerable and let $\mathcal{P}'$ be the presentation obtained from $\mathcal{P}$ by adding to $R$ all the relations of $\mathcal{P}$, that is $\mathcal{P}'=< X \, |  \, \langle R \rangle >$.
 
The set of defining relations of $\mathcal{P}'$ is not finite, is enumerable and is decidable if and only if the Word Problem is solvable for $\mathcal{P}$. Moreover there exist natural numbers $m_0$ and $n_0$ such that $\textsf{DL}(m,n)=1$ and $\Omega(m,n)=m+n$ for every $m\geqslant m_0$ and $n\geqslant n_0$ (if $m< m_0$ and/or $n< n_0$ then either $\textsf{DL}(m,n)=1$ and $\Omega(m,n)=m+n$ or $\textsf{DL}(m,n)=\Omega(m,n)=0$). \end{remark}

     \begin{proposition} \label{dasz} Let $\mathcal{P}$ and $\mathcal{P}'$ be presentations such that $\mathcal{P}'$ is obtained from $\mathcal{P}$ by applications of Tietze transformations of bounded derivation or length (in particular, let $\mathcal{P}$ and $\mathcal{P}'$ be finite presentations for the same monoid). Then the derivational lengths for $\mathcal{P}$ and $\mathcal{P}'$ are equivalent. \end{proposition}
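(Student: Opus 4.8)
The plan is to use that the relation $\simeq$ of Definition \ref{equiv} is reflexive, symmetric and transitive, so that it is enough to show that a \emph{single} bounded Tietze transformation leaves $\textsf{DL}$ invariant up to $\simeq$; transitivity then carries the invariance along the whole finite sequence joining $\mathcal{P}$ to $\mathcal{P}'$. For the derivational length the relevant transformations are those of type 1 or 2 of bounded derivation and those of type 3 or 4 of bounded length, so there are four cases.

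Two of these are immediate from Remark \ref{stef}. If $\mathcal{P}'$ is obtained by a type 1 transformation of bounded derivation with bound $c$, inequality (\ref{T1}) reads $\textsf{DL}' \leqslant \textsf{DL} \leqslant c\,\textsf{DL}'$, so $\textsf{DL}$ and $\textsf{DL}'$ are even strongly equivalent and a fortiori $\textsf{DL} \simeq \textsf{DL}'$. If $\mathcal{P}'$ is obtained by a type 3 transformation of bounded length with bound $c$, inequality (\ref{T3}) reads $\textsf{DL}(m,n) \leqslant \textsf{DL}'(m,n) \leqslant \textsf{DL}(cm,cn)+m+n$; taking the constant $\alpha=1$ in one direction and $\alpha=c$ in the other gives $\textsf{DL} \preceq \textsf{DL}'$ and $\textsf{DL}' \preceq \textsf{DL}$, hence $\textsf{DL} \simeq \textsf{DL}'$.

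The deleting cases I would reduce to the adding ones. A type 2 transformation of bounded derivation deleting a set $Q$ is exactly the inverse of a type 1 transformation of bounded derivation adding $Q$ back, and the supremum $\sup\{\textsf{dl}(q):q\in Q\}$ that witnesses boundedness is the same quantity read in either presentation; so applying (\ref{T1}) to the reverse transformation and using the symmetry of $\simeq$ finishes this case. Type 4 is where I expect the real difficulty. It is not undone by a single transformation of the dual type: as noted after Definition \ref{Transf}, the inverse of a type 4 transformation is a type 3 followed by a type 1 and a type 2, because re-adjoining the deleted generators to $\mathcal{P}'=\langle X\setminus Y \mid V\rangle$ produces $\langle X \mid V\cup T\rangle$ rather than $\mathcal{P}=\langle X \mid R\rangle$.

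To treat type 4 directly I would imitate the substitution argument of Remark \ref{stef} with the roles of the presentations exchanged. Applying the homomorphism $\varphi$ (which sends each $y$ to $u_y$ and fixes $X\setminus Y$) to a minimal $\mathcal{P}$-derivation between words of $\mathcal{M}(X\setminus Y)$ collapses every step coming from $T$ and turns every other step into a single $V$-step, yielding $\textsf{DL}' \leqslant \textsf{DL}$ at once. For the reverse inequality I would factor the passage as $\mathcal{P}' \to \mathcal{Q}:=\langle X \mid V\cup T\rangle$ (a type 3 transformation of bounded length, so $\textsf{DL}_{\mathcal{P}'}\simeq\textsf{DL}_{\mathcal{Q}}$ by (\ref{T3})) and then compare $\mathcal{Q}$ with $\mathcal{P}$, which share the generating set $X$. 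The delicate point, and the main obstacle, appears here: simulating one application of a relation $(\varphi(a),\varphi(b))\in V$ inside $\mathcal{P}$ requires one use of the underlying $(a,b)\in R$ preceded and followed by the re-expansion of the contracted letters, costing at most $|a|+|b|+1$ steps. This is uniformly bounded precisely when the defining relations of $\mathcal{P}$ have bounded length --- automatic for the finite presentations singled out in the statement --- and under that control one gets $\textsf{DL}_{\mathcal{Q}}\simeq_{\texttt{s}}\textsf{DL}_{\mathcal{P}}$, and therefore $\textsf{DL}\simeq\textsf{DL}'$. Making this re-expansion estimate precise, and checking that together with the bound $|u_y|\leqslant c$ it dominates $\textsf{DL}$ by a rescaling of $\textsf{DL}'$, is the technical heart of the proof.
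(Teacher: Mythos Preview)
Your treatment of types 1, 2 and 3 coincides with the paper's: the paper invokes (\ref{T1}) and (\ref{T3}) for types 1 and 3, and for type 2 simply observes that its inverse is a type 1 transformation, exactly as you do.

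For type 4 you diverge from the paper, and in an interesting way. The paper's entire argument for type 4 is one clause: ``its inverse is a transformation of type 3 followed by one of type 1 and by one of type 2.'' It does \emph{not} verify that the intermediate type 1 and type 2 steps are of bounded derivation, which is precisely the issue you isolate. Your computation is right: adding $R$ back to $\langle X\mid V\cup T\rangle$ costs up to $|a|+|b|+1$ derivation steps for each $(a,b)\in R$, and this is uniformly bounded only when the defining relations of $\mathcal{P}$ have bounded length. So where the paper simply asserts the decomposition suffices, you have put your finger on the place where the argument is delicate, and your proposed factorisation $\mathcal{P}'\to\mathcal{Q}=\langle X\mid V\cup T\rangle$ followed by a comparison of $\mathcal{Q}$ with $\mathcal{P}$ is exactly the decomposition the paper is implicitly using. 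In the finite (or more generally bounded-relation-length) case singled out in the statement your bound closes the argument cleanly; in full generality the paper's proof does not address the point you raise, and your caution is warranted rather than excessive.
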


\begin{proof} By $(\ref{T1})$ and $(\ref{T3})$, we have that by applying a Tietze transformation of type 1 or 3, the equivalence class of the derivational length does not change. This is also true by applying a Tietze transformation of type 2 because its inverse is a transformation of type 1; and is true also for a Tietze transformation of type 4 since its inverse is a transformation of type 3 followed by one of type 1 and by one of type 2.    \end{proof}

 \begin{remark} \label{dasz2} \rm Let $\mathcal{P}'$ be obtained from $\mathcal{P}$ by applications of Tietze transformations of bounded length and bounded work, area or group work. Then the work, the Dehn function or the group work of $\mathcal{P}$ and $\mathcal{P}'$ respectively are equivalent. The proof is analogous to that of Proposition \ref{dasz}.

 We observe that Grigorchuk and Ivanov have proved (\cite{GI}, The. 1.6) that the group work is invariant up to equivalence under some transformations called \textit{T-transformations} and \textit{stabilizations}. We have improved their result since the latter are special cases of Tietze transformations of bounded group work.
 \end{remark}

\begin{remark} \label{chuck} \rm If $\mathcal{P}'$ is obtained from $\mathcal{P}$ by applying non-bounded Tietze transformations then $\textsf{DL}$ and $\textsf{DL}'$ are in general non-equivalent. For instance if $\mathcal{P}$ and $\mathcal{P}'$ are as in Remark \ref{total}, then $\textsf{DL}$ and $\textsf{DL}'$ are equivalent if and only if $\textsf{DL}$ is equivalent to a constant function. In particular if $\mathcal{P}$ is a finite presentation with unsolvable Word Problem then $\textsf{DL}$ is greater than any computable function while $\textsf{DL}'$ is constant. This shows that if in Proposition \ref{dasz} we remove the hypothesis of boundedness for the Tietze transformations applied to a presentation, then the derivational lengths of two presentations of the same kind for the same monoid can be not related at all.

This shows also that a decidable Tietze transformation needs not be of bounded derivation (or bounded work, or bounded area). Indeed let $\mathcal{P}$ and $\mathcal{P}'$ as in Remark \ref{total}, let $\mathcal{P}$ be finitely generated decidable, let $\textsf{DL}$ be not equivalent to a constant function and suppose that the Word Problem is solvable for $\mathcal{P}$. Then $\mathcal{P}'$ is decidable, thus one can obtain $\mathcal{P}'$ from $\mathcal{P}$ by applications of decidable Tietze transformations; but since $\textsf{DL}$ and $\textsf{DL}'$ are not equivalent then $\mathcal{P}'$ cannot be obtained from $\mathcal{P}$ by applications of Tietze transformations of bounded derivation.
  \end{remark}

 Let $\mathcal{P}=< X \, | \, R >$ be a monoid presentation, let $A\subset \mathcal{M}(X)$ and suppose that we apply to $\mathcal{P}$ a Tietze transformation $\tau$. Then \textit{the set corresponding to $A$ by $\tau$} is equal to $A$ if $\tau$ is a Tietze transformation of type 1, 2 or 3; if $\tau$ is a Tietze transformation of type 4, \textit{the set corresponding to $A$ by $\tau$} is equal to $\varphi(A)$ where $\varphi$ is as in 4 of Definition \ref{Transf}. 
 
 If $\mathcal{P}'=< X' \, | \, R' >$ is the presentation obtained from $\mathcal{P}$ by application of a series of Tietze transformations, then \textit{the set corresponding to $A$ in $\mathcal{P}'$} is the set obtained applying to $A$ these transformations. In particular it is a subset of $\mathcal{M}(X')$.

\begin{proposition} \label{new} Let $\mathcal{P}=< X \, | \, R >$ and $\mathcal{P}'=< X' \, | \, R' >$ be presentations such that $\mathcal{P}'$ is obtained from $\mathcal{P}$ by applications of Tietze transformations of type 3 or 4 of bounded length and by any Tietze transformations of type 1 or 2 (in particular, let $\mathcal{P}$ and $\mathcal{P}'$ be finitely generated presentations for the same monoid). \begin{enumerate}

  \item Let $A\subset \mathcal{M}(X)$ and let $A'$ be the set corresponding to $A$ in $\mathcal{P}'$. Then the distortion functions (Definition \ref{dist}) relative to $A$ and $A'$ are equivalent.
  
  \item If $\Gamma$ and $\Gamma'$ are the functions of Definition \ref{gamma} relative to $\mathcal{P}$ and $\mathcal{P}'$ respectively then $\Gamma$ and $\Gamma'$ are strongly equivalent (Definition \ref{equiv}).
  
\end{enumerate}
  \end{proposition}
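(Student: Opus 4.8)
The plan is to verify the statement one Tietze transformation at a time, using that both $\simeq$ and $\simeq_{\texttt{s}}$ are transitive, so that the constants (and, for $\simeq$, the rescaling factors) accumulated along the given finite sequence simply multiply. In the special case where $\mathcal{P}$ and $\mathcal{P}'$ are finitely generated presentations of the same monoid one first invokes Proposition \ref{Tietze} to realize the passage by a transformation of type 3, then 1, then 2, then 4; for finitely generated presentations the type 3 and type 4 steps add or delete only finitely many generators and are therefore automatically of bounded length. The first observation --- and the reason the hypotheses here permit \emph{arbitrary} transformations of type 1 and 2, in contrast to Proposition \ref{dasz} --- is that $\textsf{Dist}_A$ and $\Gamma$ are defined solely in terms of the monoid $M$, its word metric relative to the generating set, and (for the distortion) the subset $A$; none of these data involve the set of defining relations. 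A transformation of type 1 or 2 changes only $R$, leaving $X$, $M$ and $A$ fixed, so it leaves $\textsf{Dist}_A$ and $\Gamma$ literally unchanged. Thus all the work lies in the transformations of type 3 and 4, which are assumed to be of bounded length.

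\textbf{Part 1 via the bi-Lipschitz metric comparison.} For these I would use the comparison of the two word metrics recorded in Remark \ref{stef}: if $c$ bounds the lengths of the generators added or deleted, then $|g|' \le |g| \le c\,|g|'$ for every $g\in M$ (equation (\ref{hagen})). The auxiliary point for part 1 is that $A$ and the corresponding set $A'=\varphi(A)$ represent the \emph{same} elements of $M$, because $\varphi$ induces the identity on $M$; hence $\textsf{length}_A=\textsf{length}_{A'}$ on the submonoid they generate. Feeding this and (\ref{hagen}) into $\textsf{Dist}_A(n)=\textsf{max}\{\textsf{length}_A(g):|g|\le n\}$ yields, for a type 3 step, the chain $\textsf{Dist}_A(n)\le \textsf{Dist}_{A'}(n)\le \textsf{Dist}_A(cn)$ (and the analogous chain for type 4, where the two metrics exchange roles), which is precisely equivalence in the sense of Definition \ref{equiv}.

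\textbf{Part 2 and the main obstacle.} For part 2 I would apply (\ref{hagen}) twice: once to the conjugating elements, where the set of $t$ with $ta_1=a_2t$ in $M$ is intrinsic to $M$, so that the minimal conjugator length satisfies $\gamma'\le\gamma\le c\,\gamma'$; and once to the pair $(a_1,a_2)$ through the bounds $|a_i|'\le|a_i|\le c|a_i|'$ in the outer maximum. The easy direction, $\Gamma(n_1,n_2)\le c\,\Gamma'(n_1,n_2)$, comes out as a clean strong inequality, because a pair short in the $X$-metric is also short in the $X'$-metric and its conjugator shrinks by at most $c$. The main obstacle is the reverse direction: the naive estimate only gives $\Gamma'(n_1,n_2)\le\Gamma(cn_1,cn_2)$, which is merely ordinary equivalence, whereas the claim is strong equivalence, and $\simeq_{\texttt{s}}$ forbids rescaling the argument. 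The crux is therefore to transfer the factor $c$ entering through the element-length constraint onto a single multiplicative constant on the value of $\Gamma$. This is where I would concentrate the effort: one must argue that the bounded-length generators cannot create conjugate pairs whose minimal conjugators are more than a fixed multiple longer than those already realized at the same scale in $\mathcal{P}$, so that the maximum taken over the enlarged word-metric ball is controlled by $\Gamma(n_1,n_2)$ up to a constant factor only --- a point that deserves particular care, since for a super-linearly growing $\Gamma$ it is exactly the gap between ordinary and strong equivalence.
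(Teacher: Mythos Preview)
Your reduction to single Tietze moves, the treatment of types 1 and 2, and Part 1 are exactly the paper's approach: the paper also observes that $\textsf{Dist}_A$ and $\Gamma$ are insensitive to the relator set, then uses the bi-Lipschitz comparison (\ref{hagen}) for a bounded-length type 3 (hence also type 4) move, arriving at $\textsf{Dist}_A(n)\le\textsf{Dist}'_{A'}(n)\le\textsf{Dist}_A(cn)$ (the paper in fact records the cruder $\textsf{length}'_{A'}\le\textsf{length}_A\le c\,\textsf{length}'_{A'}$ rather than your sharper $\textsf{length}_A=\textsf{length}_{A'}$, but the conclusion is the same).

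For Part 2 you have been \emph{more} careful than the paper. The paper passes directly from the pointwise inequality $\gamma'(w_1,w_2)\le\gamma(w_1,w_2)\le c\,\gamma'(w_1,w_2)$ (stated for $w_1,w_2\in\mathcal{M}(X)$) to the conclusion $\Gamma'\le\Gamma\le c\,\Gamma'$, without addressing the very point you flag: the maximum defining $\Gamma'(n_1,n_2)$ ranges over the \emph{larger} $X'$-ball, so the first inequality $\Gamma'\le\Gamma$ does not follow from the pointwise bound. Your ``naive estimate'' $\Gamma'(n_1,n_2)\le\Gamma(cn_1,cn_2)$ is exactly what the argument actually yields, and together with $\Gamma(n_1,n_2)\le c\,\Gamma'(n_1,n_2)$ this gives ordinary equivalence $\Gamma\simeq\Gamma'$, not the claimed strong equivalence $\Gamma\simeq_{\texttt{s}}\Gamma'$. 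The obstacle you isolate --- controlling $\Gamma(cn_1,cn_2)$ by a constant times $\Gamma(n_1,n_2)$ --- is not supplied by the paper, and for a super-linearly growing $\Gamma$ there is no reason it should hold. In short, the gap you identify is present in the paper's own proof; what is certainly established by the argument (yours and the paper's) is equivalence, and the assertion of \emph{strong} equivalence in part 2 appears to be an overstatement.
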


\begin{proof} Let $M$ be the monoid presented by $\mathcal{P}$ and $\mathcal{P}'$ and let $\textsf{Dist}_A$ and $\textsf{Dist}'_{A'}$ be the distortion functions relative to $A$ and $A'$. Since $\Gamma$ and the distortion function is independent from the defining relations of a presentation, then they do not change by applying Tietze transformations of type 1 or 2 . Let us apply a Tietze transformation of length bounded by a natural number $c$; let $\gamma$ and $\gamma'$ be the functions of Definition \ref{gamma} relative to $\mathcal{P}$ and $\mathcal{P}'$ respectively and let $\textsf{length}_A$ and $\textsf{length}'_{A'}$ be the length functions relative to $A$ and $A'$. By (\ref{hagen}) we have that 
  $$\gamma'(w_1, w_2) \leqslant \gamma(w_1, w_2) \leqslant c\gamma'(w_1, w_2)$$
 for every $w_1, w_2\in \mathcal{M}(X)$ such that $w_1$ is conjugated to $w_2$ and
      $$\textsf{length}'_{A'}(t) \leqslant \textsf{length}_A(t) \leqslant c\textsf{length}'_{A'}(t)$$
 for every $t$ belonging to the submonoid generated\footnote{obviously the submonoid generated by $A$ is equal to that generated by $A'$} by $A$. Thus  
          $$\Gamma' \leqslant \Gamma \leqslant c \, \Gamma'$$
 and 
          $$\textsf{Dist}'_{A'}(n) \leqslant \textsf{Dist}_A(n) \leqslant \textsf{Dist}'_{A'}(cn)$$
 and the claim is proved.  \end{proof}
                  
The first proof of 1 of Proposition \ref{new} has been given in (\cite{Farb}, Prop. 3.1), see also Prop. 1.2 of \cite{MMS}.

 \appendix 
 
  \section{A paradox of Computability Theory} \label{app}

  Let $E$ be an enumerable set and suppose that every finite subset of $E$ (i.e., every subset of $E$ with finitely many elements) is decidable. We prove that this implies that every subset of $E$ is decidable, while by admitting the \textit{Church-Turing thesis} (see 3.3 of \cite{Sipser}) there are subsets of $\mathbb{N}$ which are not even enumerable.
  
  Let $F$ be any subset of $E$. Since $E$ is enumerable then $E=\{e_n : n \in \mathbb{N}\}$, where there is an algorithm whose $n$-th output is $e_n$. Let $F_n=\{e \in F : e=e_k \, \, \mathrm{for  \, \, some  \, \,} k\leqslant n \}$. Each $F_n$ is finite and by our hypothesis it is decidable.
  
  Let $e$ be an element of $E$; then there exists $n \in \mathbb{N}$ such that $e=e_n$. We have that $e$ belongs to $F$ if and only if $e$ belongs to $F_n$. But $F_n$ is decidable, thus there exists an algorithm which decides whether $e$ belongs to $F_n$, thus $F$ is decidable.

%\bigskip

%\bigskip

%\bigskip

%\textit{Preprint of an article submitted for consideration in} International Journal of Algebra and Computation (copyright © 2010 \textit{World Scientific Publishing Company}, http://www.worldscinet.com/ijac)

\vspace{5cm}

Addresses: 

\textit{Institut de Mathématiques de Jussieu}

\textit{175, rue du Chevaleret}

\textit{75013 Paris - France}        

and

\textit{Dipartimento di Matematica dell'Università di Palermo}

\textit{Via Archirafi 34}

\textit{90123 Palermo - Italy}

\smallskip \smallskip

e-mail: \textsf{carmelovaccaro@yahoo.it}


\begin{thebibliography}{99}

\addcontentsline{toc}{section}{\textbf{Bibliography}}

\bibitem{adj} http://en.wikipedia.org/wiki/Adjunction\_space



\bibitem{Birg} J.-C. Birget, \textit{Infinite string rewrite systems and complexity}, J. Symbolic Comput. \textbf{25}(6) (1998), 759--793. 


\bibitem{Brids} M. R. Bridson, \textit{The geometry of the Word Problem}, in \emph{Invitations to Geometry and Topology}, eds. M. R. Bridson and S. M. Salamon, Oxford Univ. Press, 2002, pp. 33--94. 

\bibitem{BriHae} M. R. Bridson,  A. Haefliger, \textit{Metric Spaces of Non-Positive Curvature}, Springer-Verlag, 1999. 




\bibitem{EC} R. L. Epstein, W. A. Carnielli, \emph{Computability. Computable Functions, Logic, and the Foundations of Mathematics}, Wadsworth Publishing, 1999.

\bibitem{Farb} B. Farb, \textit{The extrinsic geometry of subgroups and the generalized word problem}, Proc. London Math. Soc. \textbf{68}(3) (1994), 577--593. 


\bibitem{Gerst} S. M. Gersten, \textit{Dehn functions and $l\sb 1$-norms of finite presentations}, in \textit{Algorithms and classification in combinatorial group theory (Berkeley, CA, 1989)}, Math. Sci. Res. Inst. Publ., 23, Springer, 1992, pp. 195--224. 



\bibitem{GI} R. I. Grigorchuk, S. V. Ivanov, \textit{On Dehn functions of infinite presentations of groups}, Geom. Funct. Anal. \textbf{18}(6) (2009), 1841--1874.

\bibitem{Grom} M. Gromov, \textit{Hyperbolic groups}, in \emph{Essays in group theory}, Springer, 1987, pp. 75--263.

\bibitem{Grom2} M. Gromov, \textit{Asymptotic invariants of infinite groups}, in \textit{Geometric group theory, Vol. 2 (Sussex, 1991)}, London Math. Soc. Lecture Note Ser., 182, Cambridge Univ. Press, 1993. 



\bibitem{Higg} P. M. Higgins, \textit{Techniques of semigroup theory}, The Clarendon Press, O. U. P., 1992.



\bibitem{Hunger} T. W. Hungerford, \textit{Algebra}, Springer-Verlag, 2003.



\bibitem{LS} R. C. Lyndon, P. E. Schupp, \emph{Combinatorial Group Theory}, Springer-Verlag, 1977.


\bibitem{MO} K. Madlener,  F. Otto, \textit{Pseudonatural algorithms for the word problem for finitely presented monoids and groups}, J. Symbolic Comput. \textbf{1}(4) (1985), 383--418.


\bibitem{MO2} K. Madlener,  F. Otto, \textit{Pseudonatural algorithms for finitely generated presentations of monoids and groups}, J. Symbolic Comput. \textbf{5}(3) (1988), 339--358.


\bibitem{MKS} W. Magnus, A. Karrass, D. Solitar, \emph{Combinatorial Group Theory}, Dover Publications, 1976.


\bibitem{MMS} S. Margolis, J. Meakin, Z. \u{S}uni\'k, \textit{Distortion functions and the membership problem for submonoids of groups and monoids}, in \textit{Geometric methods in group theory}, 109--129, \textit{Contemp. Math.}, \textbf{372}, Amer. Math. Soc., 2005. 


\bibitem{Olsh} A. Ol'shanskii, \emph{Geometry of Defining Relations in Groups}, Kluwer, 1991. 



\bibitem{Miller} C. F. Miller III, \textit{Decision problems for groups - survey and reflections}, in \textit{Algorithms and classification in combinatorial group theory (Berkeley, CA, 1989)}, 1--59, Math. Sci. Res. Inst. Publ., 23, Springer, 1992.


\bibitem{Rem1} J.H. Remmers, Some algorithmic problems for semigroups: A geometric approach, Ph.D. thesis, University of Michigan, 1971.

\bibitem{Rem2} J.H. Remmers, \textit{On the geometry of semigroup presentations}, Adv. Math. \textbf{36}(3) (1980), 283–296.


\bibitem{Short} H. Short, \textit{Diagrams and groups}, in \emph{The Geometry of the Word Problem for Finitely Generated Groups}, Birkhäuser, 2007.

\bibitem{Shen} A. Shen, N. K. Vereshchagin, \textit{Computable functions}, American Mathematical Society, 2003.

\bibitem{Sipser} M. Sipser, \textit{Introduction to the Theory of Computation, Second Edition}, Thomson, 2005.

\bibitem{{VaccAlgComb}} C. Vaccaro, Algorithmic and combinatorial methods for enumerating the relators of a group presentation, https://arxiv.org/abs/0802.3544


\end{thebibliography}
 \end{document}